\theoremstyle{plain}
\newtheorem{theorem}{Theorem}[section]
\newtheorem*{theorem*}{Theorem}
\newtheorem*{prop*}{Proposition}
\newtheorem*{cor*}{Corollary}
\newtheorem{rem}[theorem]{Remark}
\newtheorem*{mt*}{Main Theorem}
\newcommand{\rr}{\mathbb{R}}
\newcommand{\C}{\mathbb{C}}
\newcommand{\del}{\partial}
\newcommand{\delbar}{\overline{\del}}
\newcommand\Z{{\mathbb Z}}
\begin{document}
\title[Cohomological aspects on complex and symplectic manifolds]{Cohomological aspects on complex and symplectic manifolds}
\author{Nicoletta Tardini}
\date{\today}
\address{Dipartimento di Matematica\\
Universit\`{a} di Pisa \\
Largo Bruno Pontecorvo 5, 56127 \\
Pisa, Italy}
\email{tardini@mail.dm.unipi.it}
\thanks{Partially supported by GNSAGA
of INdAM}
\keywords{Complex manifold, symplectic manifold, non-K\"ahler geometry,
Bott-Chern cohomology, Aeppli cohomology, $\del\delbar$-Lemma,
hard-Lefschetz condition.}
\subjclass[2010]{32Q99, 32C35, 53D05, 53D99}
\begin{abstract}
We discuss how quantitative cohomological informations could provide
qualitative properties on complex and symplectic manifolds.
In particular we focus on the Bott-Chern and the Aeppli cohomology groups in both cases, since they represent useful tools in studying non K\"ahler geometry. We give an overview on the comparisons among the dimensions of the cohomology groups that can be defined and we show how we reach the $\del\delbar$-lemma in complex geometry and the Hard-Lefschetz condition in symplectic geometry.
For more details we refer to \cite{angella-tardini} and \cite{tardini-tomassini-symplectic}.
\end{abstract}

\maketitle

\section{Introduction}
\label{sec:1}
In this note we discuss the informations that we can obtain
on both complex and symplectic (not necessarily K\"ahler) manifolds
studying the space of forms endowed with suitable differential operators;
in particular, we focus on how quantitative cohomological properties
could provide qualitative informations on the manifold.
Recall that a smooth K\"ahler manifold is a complex manifold endowed with a Hermitian metric whose fundamental $2$-form is $d$-closed. For dimensional reasons
every Riemann surface is K\"ahler but in higher dimension this is not true in general.
In complex dimension two, K\"ahlerness can be topologically characterized in terms of the first Betti number (see \cite{kodaira}, \cite{miyaoka}, \cite{siu}) but a similar result does not hold in dimension
greater than two. Nevertheless there are many topological obstructions to the existence of a K\"ahler metric on a manifold, for example the odd Betti numbers are even and the even Betti numbers are positive.
These results follow from the strong requests on the involved geometric structures and their deep relations. It seems therefore natural to ask what happens
if we weaken those structures and/or their relations.
In particular we could weaken the complex condition looking at 
non integrable almost-complex structures or we could look at complex manifolds
with a weaker metric condition (e.g., balanced metrics \cite{michelsohn},
SKT metrics \cite{bismut}, etc).
On the other side we could ignore the (almost-)complex structure focusing
the attention on the existence of a non-degenerate $d$-closed $2$-form (i.e., a symplectic form) moving therefore to symplectic geometry.
In any case, an important global tool in studying smooth manifold is furnished by cohomology, more precisely cohomology groups that are invariant for the considered geometric structures.\\
In complex non-K\"ahler geometry it turns out that the classical de Rham and Dolbeault cohomology groups do not suffice in studying a complex manifold (see e.g., \cite{angella-iwasawa}), indeed many informations are contained
in the Bott-Chern and Aeppli cohomologies, defined, on a complex manifold $X$, respectively as
$$
H^{\bullet,\bullet}_{BC}(X):=\frac{Ker\; \partial\cap Ker\; \overline\partial}{Im \;\partial\overline\partial}\;,\qquad
H^{\bullet,\bullet}_{A}(X):=\frac{Ker \;\partial\overline\partial}{Im\; \partial+Im\;\overline\partial}.
$$
They represent a bridge between
a topological invariant (the de Rham cohomology) and a complex invariant
(the Dolbeault cohomology).
In general we have the following picture:
$$\xymatrix{
  & H^{\bullet,\bullet}_{BC}(X) \ar[d]\ar[ld]\ar[rd] & \\
  H^{\bullet,\bullet}_{\partial}(X) \ar[rd] & H^{\bullet}_{dR}(X;\mathbb{C}) \ar[d] & H^{\bullet,\bullet}_{\overline\partial}(X) \ar[ld] \\
  & {\phantom{\;.}} H^{\bullet,\bullet}_{A}(X) \;, &
} $$
where the maps are the ones induced by the identity.
Generally such maps are neither injective nor surjective but when the map
$H^{\bullet,\bullet}_{BC}(X)\longrightarrow H^{\bullet,\bullet}_A(X)$
is injective, the manifold $X$ is said to satisfy the \emph{$\del\delbar$-lemma}.
Every K\"ahler manifold satisfies the $\del\delbar$-lemma but the converse is not true.
In this paper we will compare the dimensions of these cohomology groups recalling some results contained in \cite{angella-tomassini-inequality} and \cite{angella-tardini}; in particular we will focus on how
just knowing the dimensions of the Bott-Chern (and dually Aeppli) cohomology groups we can understand whether the $\del\delbar$-lemma holds.
More precisely,
\begin{theorem}[see Theorem \ref{angella-tomassini-disuguaglianza}, Theorem \ref{characterization deldelbarlemma}, Remark \ref{dimensione2}]
 Let $X$ be a compact complex manifold. Then,
 the following facts are equivalent:
\begin{enumerate}
\item{the $\del\delbar$-lemma holds on $X$;}
\medskip
\item{$\Delta^k:=\sum_{p+q=k}
\left( dim_\mathbb{C} H^{p,q}_{BC}(X) + dim_\mathbb{C} H^{p,q}_{A}(X) \right) - 2\, b_k \;=\; 0 \;$, for any $k\in\mathbb{Z}$;}
 \medskip
 \item{$\sum_{p+q=k} \left( dim_\mathbb{C} H^{p,q}_{BC}(X) - dim_\mathbb{C} H^{p,q}_{A}(X) \right) \;=\; 0$, for any $k\in\mathbb{Z}$.}
\end{enumerate}
 Moreover, if $X$ has complex dimension $2$, then
 $X$ is K\"ahler if and only if $\Delta^2=0$.
\end{theorem}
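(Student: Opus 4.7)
The equivalence $(1) \Leftrightarrow (2)$ is already the content of Theorem \ref{angella-tomassini-disuguaglianza}: one has the unconditional inequality $\Delta^k \geq 0$, with equality for every $k$ characterizing the $\partial\overline\partial$-lemma. So my plan is to reduce the whole statement to that theorem, handling separately the new equivalence $(1) \Leftrightarrow (3)$ and the surface case. The direction $(1) \Rightarrow (3)$ is immediate: under the $\partial\overline\partial$-lemma the natural maps induce isomorphisms $H^{p,q}_{BC}(X) \simeq H^{p,q}_{A}(X) \simeq H^{p,q}_{\overline\partial}(X)$ together with a Hodge-type decomposition of $H^k_{dR}(X;\mathbb{C})$, whence the total Bott-Chern and Aeppli dimensions in degree $k$ both collapse to $b_k$ and the alternating sum in (3) vanishes.

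The decisive direction is $(3) \Rightarrow (1)$, which is the content of Theorem \ref{characterization deldelbarlemma}. My plan is to combine two ingredients. First, Serre-type duality for Bott-Chern and Aeppli cohomologies yields $\dim_\mathbb{C} H^{p,q}_{BC}(X) = \dim_\mathbb{C} H^{n-p,n-q}_{A}(X)$, hence $\sum_{p+q=k}\dim H^{p,q}_{BC} = \sum_{p+q=2n-k}\dim H^{p,q}_{A}$; coupled with (3), this forces a Poincar\'e-type symmetry on the Bott-Chern and Aeppli Poincar\'e polynomials individually. Second, I would sharpen the Angella-Tomassini estimate by dissecting $\Delta^k$ through the Varouchas-type long exact sequences that interlace $H^{\bullet,\bullet}_{BC}$, $H^{\bullet,\bullet}_{A}$ and the Dolbeault cohomologies, rewriting it as a sum of non-negative contributions one of which is sensitive precisely to $a_k - b'_k$, where $a_k := \sum_{p+q=k}\dim H^{p,q}_{BC}$ and $b'_k := \sum_{p+q=k}\dim H^{p,q}_{A}$. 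Under hypothesis (3) every such contribution is forced to vanish, giving $\Delta^k = 0$ for all $k$ and hence (2) and (1). I expect this refinement to be the main obstacle, since the Angella-Tomassini inequality alone controls only the sum $a_k + b'_k$ and one genuinely needs a finer exact-sequence bookkeeping to detect the cancellation of the difference.

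For the dimension-two statement (Remark \ref{dimensione2}), the key simplification is that Serre duality forces $a_2 = b'_2$ automatically on a compact complex surface, so $\Delta^2 = 2(a_2 - b_2)$ collapses to a single equality between the total Bott-Chern dimension in degree $2$ and the second Betti number. Using the classical surface formulas relating $b_1$ to $h^{1,0}_{\overline\partial} + h^{0,1}_{\overline\partial}$ via a parity correction, one then checks that $\Delta^2 = 0$ is equivalent to $b_1$ being even, and hence by Kodaira-Miyaoka-Siu (cited in the introduction) to K\"ahlerness. Conversely every K\"ahler manifold satisfies the $\partial\overline\partial$-lemma, so in particular $\Delta^2 = 0$, closing the equivalence.
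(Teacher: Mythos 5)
Your treatment of $(1)\Leftrightarrow(2)$ and of $(1)\Rightarrow(3)$ matches the paper. The genuine gap is in $(3)\Rightarrow(1)$. First, as a matter of logic, decomposing $\Delta^k$ into non-negative contributions one of which is ``sensitive to $a_k-b'_k$'' (in your notation $a_k:=\sum_{p+q=k}\dim_{\C} H^{p,q}_{BC}(X)$, $b'_k:=\sum_{p+q=k}\dim_{\C} H^{p,q}_{A}(X)$) cannot suffice: the vanishing of that one contribution under hypothesis (3) says nothing about the remaining non-negative summands, so your asserted conclusion ``every such contribution is forced to vanish'' is precisely the missing step, not a consequence of what precedes it. Second, and more decisively, the only ingredients you allow yourself --- the Varouchas exact sequences and the Hodge-$*$ duality $\dim_{\C} H^{p,q}_{BC}(X)=\dim_{\C} H^{n-p,n-q}_{A}(X)$ --- are available for arbitrary suitably bounded double complexes, and for such complexes the implication $(3)\Rightarrow(1)$ is \emph{false}. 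The paper itself exhibits the obstruction: for the symplectic double complex one always has $h^k_{d+d^\Lambda}=h^k_{dd^\Lambda}$, i.e.\ the analogue of condition (3) holds unconditionally, while the $dd^\Lambda$-lemma can fail. Any correct proof of Theorem \ref{characterization deldelbarlemma} must therefore use an input special to the complex setting, namely conjugation ($\dim_{\C} H^{p,q}_{BC}(X)=\dim_{\C} H^{q,p}_{BC}(X)$ and its analogues), as the paper flags explicitly (``it is not algebraic, indeed conjugation is needed''). Your plan never invokes conjugation, so no refinement of the exact-sequence bookkeeping along the lines you describe can close the argument.

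On the surface case: the reduction $\Delta^2=2(a_2-b_2)$ via $a_2=b'_2$ is correct, but the step ``$\Delta^2=0$ iff $b_1$ even via classical surface formulas'' conceals the real input. The classical formulas control Dolbeault (Hodge) numbers, not Bott-Chern numbers; what the paper actually uses is Teleman's theorem that $\Delta^1=0$ on every compact complex surface, which together with the duality $\Delta^k=\Delta^{2n-k}$ and $\Delta^0=0$ reduces the $\del\delbar$-lemma (equivalently, by Kodaira--Miyaoka--Siu, K\"ahlerness) to the single condition $\Delta^2=0$. Without that vanishing, or an equivalent computation of the Bott-Chern cohomology of surfaces as in \cite{teleman} and \cite{angella-dlousski-tomassini}, the equivalence does not follow from the general theorems, which require \emph{all} the $\Delta^k$ to vanish.
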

In a similar fashion on a compact symplectic manifold $(X,\omega)$ it is possible
to consider the symplectic Bott-Chern and Aeppli cohomology groups, as defined by
Tseng and Yau in \cite{tsengyauI} by using the operators $d$ and its
symplectic-adjoint $d^\Lambda$. They are the appropriate cohomology groups in order to study symplectic Hodge theory.
In the present work, similarly to the complex case, we will consider
some comparisons among the dimensions of these cohomology groups
collecting some results contained in \cite{angella-tomassini-algebraic},
\cite{angella-tardini} and \cite{tardini-tomassini-symplectic}.
It turns out that the symplectic Bott-Chern cohomology $H^\bullet_{d+d^\Lambda}(X)$ (and dually Aeppli cohomology $H^\bullet_{dd^\Lambda}(X)$)
suffices to characterize the $dd^\Lambda$-lemma,
indeed we have the following
\begin{theorem}[see \cite{brylinski}, \cite{mathieu}, \cite{merkulov},
\cite{yan}, \cite{cavalcanti}, \cite{angella-tomassini-algebraic}, Theorem \ref{characterization HLC}]
Let $(X,\omega)$ be a compact symplectic manifold of dimension $2n$. Then, the following facts are equivalent:
\begin{enumerate}
\item{the \emph{hard-Lefschetz condition} (HLC for short) holds, i.e., the maps
\[
\left[\omega\right]^k:H^{n-k}_{dR}(X)\longrightarrow H^{n+k}_{dR}(X), \qquad 0\leq k\leq n
\]
are all isomorphisms;}
\medskip
\item{the \emph{$dd^\Lambda$-lemma} holds, i.e., the natural maps induced by the identity $H^{\bullet}_{d+d^\Lambda}(X)
\longrightarrow H^{\bullet}_{dR}(X)$ are injective;}
\medskip
\item{$\Delta^k:=dim \;H^k_{d+d^\Lambda}(X) + dim\; H^k_{dd^\Lambda}(X)  - 2\, b_k\;=\; 0 \;$, for any $k\in\mathbb{Z}$.}
\end{enumerate}
Moreover, if $X$ has dimension $4$, then
 $X$ satisfies $HLC$ if and only if $\Delta^2=0$.
\end{theorem}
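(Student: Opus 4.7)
The plan is to prove $(1)\Leftrightarrow(2)\Leftrightarrow(3)$ in parallel with the complex-case proof of the preceding theorem, and then treat the dimension-$4$ refinement separately. The equivalence $(1)\Leftrightarrow(2)$ between HLC and the $dd^\Lambda$-lemma is the classical result of Brylinski, Mathieu, Merkulov, Yan and Cavalcanti, which I would simply invoke. The substantive new content is therefore $(2)\Leftrightarrow(3)$ and the dimension-$4$ addendum.

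For the easy direction $(2)\Rightarrow(3)$ I would consider the diagram of identity-induced maps
$$
H^k_{d+d^\Lambda}(X)\;\longrightarrow\;H^k_{dR}(X)\;\longrightarrow\;H^k_{dd^\Lambda}(X),
$$
exactly as in the complex case. Assuming the $dd^\Lambda$-lemma (injectivity of the first arrow), a direct diagram chase on the double complex $(\Omega^\bullet(X),d,d^\Lambda)$ — the symplectic analog of what happens for $(\partial,\ovp)$ — shows that both maps become isomorphisms. Hence both dimensions equal $b_k$ and $\Delta^k=0$ in every degree.

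The reverse implication $(3)\Rightarrow(2)$ rests on the symplectic analog of the Angella--Tomassini inequality,
$$
\dim H^k_{d+d^\Lambda}(X) + \dim H^k_{dd^\Lambda}(X) \;\geq\; 2\, b_k,
$$
valid in every degree, with equality in every degree if and only if the $dd^\Lambda$-lemma holds. This is the step I expect to be the main obstacle. The argument is purely linear-algebraic: working on the complex $(\Omega^\bullet(X),d,d^\Lambda)$, I would introduce the auxiliary spaces $A^k:=\Ker d\cap\Ker d^\Lambda$ and $B^k:=\Imm(dd^\Lambda)$ in degree $k$, write down the natural short exact sequences linking them to $H^k_{d+d^\Lambda}$, $H^k_{dd^\Lambda}$ and $H^k_{dR}$, and compare dimensions. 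The equality case then forces every $d$- or $d^\Lambda$-exact form which is both $d$- and $d^\Lambda$-closed to be $dd^\Lambda$-exact, which is exactly the $dd^\Lambda$-lemma.

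For the dimension-$4$ statement, three facts combine: (a) the inequality above gives $\Delta^k\geq 0$ for all $k$; (b) the symplectic Hodge $*$ provides a Poincar\'e duality $H^k_{d+d^\Lambda}(X)\cong H^{4-k}_{dd^\Lambda}(X)$, so $\Delta^k=\Delta^{4-k}$; (c) since both symplectic Bott--Chern and Aeppli cohomologies have Euler characteristic equal to $\chi(X)$, one gets $\sum_k(-1)^k\Delta^k=0$. A direct computation shows $\Delta^0=\Delta^4=0$ on any compact connected symplectic manifold, and substituting into (c) together with (b) yields $\Delta^1=\Delta^3=\tfrac{1}{2}\Delta^2$. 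Thus $\Delta^2=0$ forces all $\Delta^k=0$, and HLC follows from the already-proved equivalence.
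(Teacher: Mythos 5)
Your handling of the equivalence of (1), (2), (3) matches the paper's route: $(1)\Leftrightarrow(2)$ is the classical Brylinski--Mathieu--Merkulov--Yan--Cavalcanti circle of results, and $(2)\Leftrightarrow(3)$ is the Fr\"olicher-type inequality of Angella--Tomassini applied to the strip-supported double complex $(B^{\bullet_1,\bullet_2}, d\otimes\mathrm{id}, d^\Lambda\otimes\beta)$; your sketch via Varouchas-type exact sequences is in the right spirit, with all the actual content sitting in the cited reference. The genuine gap is in the dimension-$4$ addendum: your ingredient (c) is false. The Euler characteristic of the symplectic Bott--Chern cohomology is \emph{not} $\chi(X)$ in general (already for an abstract double complex, a single length-two zigzag contributes $0$ to the total-cohomology Euler characteristic but $\pm1$ to the Bott--Chern one). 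Concretely, writing $\tilde\Delta^k:=h^k_{d+d^\Lambda}-b_k$ and using $\tilde\Delta^k=\tilde\Delta^{4-k}$, one has on a compact symplectic $4$-manifold
$$\sum_k(-1)^k h^k_{d+d^\Lambda}(X)\;=\;\chi(X)-2\tilde\Delta^1+\tilde\Delta^2\,,$$
so once $\tilde\Delta^1=0$ is known your claim $\sum_k(-1)^k\Delta^k=0$ is \emph{equivalent} to $\tilde\Delta^2=0$, i.e.\ to HLC itself --- the step is circular. A counterexample is case a) of the paper's table ($\mathfrak{g}=\mathfrak{g}_{3,1}\oplus\mathfrak{g}_1$, the Kodaira--Thurston manifold): there $\chi=0$ while $h^0-h^1+h^2-h^3+h^4=1-3+5-3+1=1$ for the $d+d^\Lambda$-cohomology. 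Hence the deduction $\Delta^1=\Delta^3=\tfrac12\Delta^2$ collapses.

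What replaces it, and is the genuinely new input of the four-dimensional refinement (Theorem 4.3 and 4.5 of the cited Tardini--Tomassini paper, reproduced in Section 3), is a direct proof that $\tilde\Delta^1=0$ on \emph{every} compact symplectic manifold: the map $H^1_{d+d^\Lambda}(X)\to H^1_{dR}(X)$ is surjective because any $d$-closed $1$-form $\alpha$ satisfies $d^\Lambda\alpha=[d,\Lambda]\alpha=-\Lambda d\alpha=0$ for degree reasons, and injective because if $\alpha=df$ then the Hodge decomposition of $f$ for the $d^\Lambda$-Laplacian gives $f=c+d^\Lambda\beta$ with $c$ constant, whence $\alpha=dd^\Lambda\beta$ is trivial in $H^1_{d+d^\Lambda}(X)$. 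Combined with $\Delta^0=0$, the duality $\Delta^k=\Delta^{4-k}$ (your point (b)) and the nonnegativity $\Delta^k\geq0$ (your point (a)), this leaves $\Delta^2$ as the only possibly nonzero degree, and the addendum then follows from the already-established equivalence $(1)\Leftrightarrow(3)$. Note that this degree-one isomorphism has no analogue in the complex case, which is precisely why the symplectic four-dimensional statement needs a separate argument rather than a formal Euler-characteristic bookkeeping.
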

 
\noindent {\em Acknowledgements.}
The author would like to thank the organizers Daniele Angella, Paolo de Bartolomeis, Costantino Medori and Adriano Tomassini for the kind invitation
to the \emph{INdAM Meeting Complex and Symplectic Geometry} held at Palazzone of Cortona. Many thanks to all the participants at the conference who contributed to create such a nice environment.
Special thanks also to Adriano Tomassini and Daniele Angella for their constant support, encouragement, for many useful discussions on the subject
and for their contribution to the results obtained jointly with the author.
This proceeding is dedicated to the memory of the very kind person and excellent mathematician Paolo de Bartolomeis.\\

\smallskip

\section{Complex cohomologies}
\label{sec:2}

We begin this section with some preliminaries and fixing some notations.
Let $X$ be a compact complex manifold of complex dimension $n$. With $A^{p,q}(X)$ we denote the space of complex $(p,q)$-forms on $X$. As a consequence of the integrability of the complex structure
the triple $\left(A^{\bullet,\bullet}(X),\,\partial,\,\overline\partial\right)$ represents a double complex, indeed the following relations hold: $\partial^2=0$, $\overline\partial^2=0$ and
$\partial\overline\partial+\overline\partial\partial=0$.\\
The complex \emph{de Rham}, \emph{Dolbeault} and \emph{conjugate Dolbeault}
cohomology groups of $X$ have been widely studied and they are defined, respectively, as
$$
H^\bullet_{dR}(X;\mathbb{C}):=\frac{Ker \;d}{Im \;d}\;,\qquad
H^{\bullet,\bullet}_{\overline\partial}(X):=\frac{Ker\; \overline\partial}{Im\;\overline\partial}\;,\qquad
H^{\bullet,\bullet}_{\partial}(X):=\frac{Ker\; \partial}{Im\; \partial}\;.
$$
Roughly speaking, if we draw a double complex as follows, for the Dolbeault cohomology we are
looking at vertical arrows, since the operator $\delbar$ changes the second degree of a $(p,q)$-form, and for its conjugate we are looking at horizontal arrows,
since the operator $\del$ changes the first degree of a $(p,q)$-form.
For a more detailed explanation of the interpretation of a double complex as a sum of indecomposable objects as zig-zag, dots and squares we refer to \cite{angella-survey}.

\begin{center}
\begin{tikzpicture}
\newcommand\un{1.3}

\draw[help lines, step=\un] (0,0) grid (3*\un,3*\un);

\foreach \x in {0,...,2}
  \node at (\un*.5+\un*\x,-.3) {\x};
\foreach \y in {0,...,2}
  \node at (-.3,\un*.5+\un*\y) {\y};

\coordinate (E) at (1*\un+1/2*\un, 2*\un+1/2*\un);
\coordinate (N) at (2*\un+1/2*\un, 2*\un+1/2*\un);
\coordinate (U) at (0*\un+1/2*\un, 1*\un+1/2*\un);
\coordinate (V) at (0*\un+1/2*\un, 2*\un+1/2*\un);

\newcommand{\raggio}{1*\un pt}
\fill (E) circle (\raggio);
\fill (N) circle (\raggio);
\fill (U) circle (\raggio);
\fill (V) circle (\raggio);

\draw[->] (E) -- (N);
\draw[->] (U) -- (V);

\begingroup\makeatletter\def\f@size{6}\check@mathfonts
\node at (1*\un+1/2*\un+.1, 2*\un+1/2*\un-.3) {$\gamma$};
\node at (2*\un+1/2*\un+.1, 2*\un+1/2*\un-.3) {$\delta$};
\node at (0*\un+1/2*\un+.1, 1*\un+1/2*\un-.3) {$\alpha$};
\node at (0*\un+1/2*\un+.1, 2*\un+1/2*\un+.3) {$\beta$};
\endgroup

\end{tikzpicture}
\end{center}

\medskip

For instance, in the above picture we mean that
$\delbar\alpha=\beta$, $\del\alpha=\del\beta=0$, $\del\gamma=\delta$ and
$\delbar\gamma=\delbar\delta=0$. So $\alpha$ and $\beta$ are representatives
of two non-trivial classes in $H_\del^{\bullet,\bullet}(X)$ and $\beta$ represents
the trivial class in $H_{\delbar}^{0,2}(X)$.
Similarly goes for $\delta$ and $\gamma$.
Notice that we can not have two consecutive vertical (resp. horizontal) arrows
because $\delbar^2=0$ (resp. $\del^2=0$).\\
Nevertheless there is no natural map between the de Rham (a topological invariant) and Dolbeault (a holomorphic invariant) cohomologies, in this sense a bridge between them is furnished by
the \emph{Bott-Chern} \cite{bott-chern} and the \emph{Aeppli} \cite{aeppli} cohomology groups defined by
$$
H^{\bullet,\bullet}_{BC}(X):=\frac{Ker\; \partial\cap Ker\; \overline\partial}{Im \;\partial\overline\partial}\;,\qquad
H^{\bullet,\bullet}_{A}(X):=\frac{Ker \;\partial\overline\partial}{Im\; \partial+Im\;\overline\partial}.
$$
The same definitions can be stated, more generally, for a double complex
$(B^{\bullet,\bullet},\partial,\overline\partial)$ of vector spaces.
In this way we are taking into accounts the corners in the
double complex of forms. For example looking at this picture

\begin{center}
\begin{tikzpicture}
\newcommand\un{1.3}

\draw[help lines, step=\un] (0,0) grid (3*\un,3*\un);

\foreach \x in {0,...,2}
  \node at (\un*.5+\un*\x,-.3) {\x};
\foreach \y in {0,...,2}
  \node at (-.3,\un*.5+\un*\y) {\y};

\coordinate (A) at (0*\un+1/2*\un, 1*\un+1/2*\un);
\coordinate (B) at (1*\un+1/2*\un, 1*\un+1/2*\un);
\coordinate (C) at (1*\un+1/2*\un, 0*\un+1/2*\un);
\coordinate (D) at (2*\un+1/2*\un, 0*\un+1/2*\un);

\newcommand{\raggio}{1*\un pt}
\fill (A) circle (\raggio);
\fill (B) circle (\raggio);
\fill (C) circle (\raggio);
\fill (D) circle (\raggio);

\draw[->] (A) -- (B);
\draw [->] (C)-- (B);
\draw [->] (C)-- (D);

\begingroup\makeatletter\def\f@size{6}\check@mathfonts

\node at (0*\un+1/2*\un+.1, 1*\un+1/2*\un+.3) {$\alpha$};
\node at (1*\un+1/2*\un+.1, 1*\un+1/2*\un+.3) {$\beta$};
\node at (1*\un+1/2*\un+.1, 0*\un+1/2*\un-.3) {$\gamma$};
\node at (2*\un+1/2*\un+.1, 0*\un+1/2*\un-.3) {$\delta$};

\endgroup

\end{tikzpicture}
\end{center}

the forms $\alpha$ and $\gamma$ are representatives of two non-trivial classes in 
$H_{A}^{\bullet,\bullet}(X)$ and $\beta,\;\delta$ in $H_{BC}^{\bullet,\bullet}(X)$.
Namely, ingoing corners contribute to the Bott-Chern cohomology and
outgoing corners to the Aeppli cohomology.\\
As regards the algebraic structure, a very easy computation shows that
the product induced by the wedge product on forms induces a structure of algebra for the Bott-Chern cohomology of a complex manifold
$H^{\bullet,\bullet}_{BC}(X)$ and a structure of $H^{\bullet,\bullet}_{BC}(X)$-module for the Aeppli cohomology
$H^{\bullet,\bullet}_{A}(X)$.\\
In \cite{schweitzer}, see also \cite{kodaira-spencer}, Hodge theory for the Bott-Chern and the Aeppli cohomologies is developed. In particular, once fixed a Hermitian metric
$g$ on $X$ the Bott-Chern and the Aeppli cohomology groups of $X$ are, respectively, isomorphic to the kernel of the following $4^{th}$-order elliptic self-adjoint differential operators
$$ 
\Delta_{BC}^g \;:=\;
\left(\del\delbar\right)\left(\del\delbar\right)^*+\left(\del\delbar\right)^*\left(\del\delbar\right)+
\left(\delbar^*\del\right)\left(\delbar^*\del\right)^*+\left(\delbar^*\del\right)^*\left(\delbar^*\del\right)+\delbar^*\delbar+\del^*\del $$
and
$$\Delta_{A}^g \;:=\; \del\del^*+\delbar\delbar^*+\left(\del\delbar\right)^*\left(\del\delbar\right)+\left(\del\delbar\right)\left(\del\delbar\right)^*+\left(\delbar\del^*\right)^*\left(\delbar\del^*\right)+\left(\delbar\del^*\right)\left(\delbar\del^*\right)^*\,.
$$
Therefore these cohomologies
are finite-dimensional vector spaces. Moreover, differently
from the Poincar\'e and Serre duality for the Dolbeault cohomology,
the Hermitian duality does not
preserve these cohomologies; more precisely when a Hermitian metric is fixed on $X$, the $\mathbb{C}$-anti-linear Hodge-$*$-operator induces an (un-natural) isomorphism between the Bott-Chern cohomology and the Aeppli cohomology, namely
$$
*:H^{p,q}_{BC}(X)\longrightarrow H^{n-p,n-q}_{A}(X)
$$
is an isomorphism for any $p,q\in\mathbb{Z}$; this means that we do not have symmetry with respect to the center in the Bott-Chern (and Aeppli) diamond.
Therefore, we have the following equalities:
$\dim_{\C} H^{p,q}_{BC}(X)=\dim_{\C} H^{q,p}_{BC}(X)=
\dim_{\C} H^{n-q,n-p}_{A}(X)=\dim_{\C} H^{n-p,n-q}_{A}(X)$,
where the first one and the last one are due to the fact that the conjugation preserves
the Bott-Chern and the Aeppli cohomologies respectively (giving a symmetry in the Bott-Chern diamond with respect to the central column).

\begin{rem}
Notice that, in general, the isomorphism $H^{\bullet,\bullet}_{BC}(X)
\simeq Ker\Delta_{BC}^g$ is of vector spaces not algebras,
indeed the wedge product of harmonic forms is not necessarily harmonic.
The study of Hermitian metrics whose space of Bott-Chern harmonic forms
has a structure of algebra has been developed in \cite{angella-tomassini-formality}
and in \cite{tardini-tomassini-formality}
in terms of geometric formality.
\end{rem}

By definition, the identity induces natural maps of (bi-)graded vector spaces between the Bott-Chern, Dolbeault, de Rham, and Aeppli cohomologies:
$$\xymatrix{
  & H^{\bullet,\bullet}_{BC}(X) \ar[d]\ar[ld]\ar[rd] & \\
  H^{\bullet,\bullet}_{\partial}(X) \ar[rd] & H^{\bullet}_{dR}(X;\mathbb{C}) \ar[d] & H^{\bullet,\bullet}_{\overline\partial}(X) \ar[ld] \\
  & {\phantom{\;.}} H^{\bullet,\bullet}_{A}(X) \;. &
} $$
Recall that a compact complex manifold is said to satisfy the {\em $\partial\overline\partial$-Lemma} if the natural map $H^{\bullet,\bullet}_{BC}(X)\longrightarrow H^{\bullet,\bullet}_{A}(X)$ is injective. This is equivalent to any of the above maps being an isomorphism, see \cite[Lemma 5.15]{deligne-griffiths-morgan-sullivan}.
Since any compact K\"ahler manifold satisfies the $\partial\overline\partial$-lemma
the Bott-Chern and Aeppli cohomologies could provide more informations on a compact complex manifold which does not admit any K\"ahler metric.
For this reason, from now on, we will implicitly assume that our manifolds are not K\"ahler.

\subsection{Inequalities on compact complex manifolds}

In this section we are mainly interested in discussing quantitative cohomological informations on complex manifolds with the final aim of understanding which
integers can appear as dimensions of cohomology groups of complex manifolds.
In the compact K\"ahler case the Hodge decomposition Theorem states that the
Dolbeault cohomology groups give a decomposition of the de Rham cohomology,
inducing at the level of cohomology the decomposition of complex forms
in $(p,q)$-forms.
This is no longer true if we drop the K\"ahler assumption.
Fr\"ohlicher in \cite{frolicher} construct a spectral sequence whose first page
is isomorpic to the Dolbeault cohomology and converging to the de Rham
cohomology proving, consequently, 
that on any compact 
complex manifold $X$ there is a topological lower bound for the Hodge numbers
(the dimensions of the Dolbeault cohomology groups) in terms of
the Betti numbers (the dimensions of the de Rham cohomology groups), namely
for any $k\in\mathbb{Z}$,
$$ \sum_{p+q=k} \dim_\mathbb{C} H^{p,q}_{\overline\partial}(X)  \;\geq\; b_k \;. $$

A Fr\"ohlicher type inequality has been proven by Angella and Tomassini in
\cite{angella-tomassini-inequality} taking into consideration the Bott-Chern and the Aeppli cohomology groups. For clearness we report here the complete statement.

\begin{theorem}[{\cite[Theorem A, Theorem B]{angella-tomassini-inequality}}]\label{angella-tomassini-disuguaglianza}
 Let $X$ be a compact complex manifold. Then, for any $k\in\mathbb{Z}$,
 $$ \Delta^k(X) \;:=\; \sum_{p+q=k} \left( \dim_\mathbb{C} H^{p,q}_{BC}(X) + \dim_\mathbb{C} H^{p,q}_{A}(X) \right) - 2\, b_k \;\geq\; 0 \;. $$
 Moreover, $X$ satisfies the $\partial\overline\partial$-Lemma if and only if, for any $k\in\Z$, there holds $\Delta^k(X)=0$.
\end{theorem}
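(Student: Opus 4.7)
The plan is to prove the inequality via a careful dimension count on the natural maps between the four cohomologies, and then to obtain the equivalence by combining this with the Deligne--Griffiths--Morgan--Sullivan characterization of the $\partial\overline\partial$-lemma.

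For the inequality, I would introduce the two natural linear maps
\[
\Phi^k : \bigoplus_{p+q=k} H^{p,q}_{BC}(X) \longrightarrow H^k_{dR}(X;\mathbb{C}),\qquad \Psi^k : H^k_{dR}(X;\mathbb{C}) \longrightarrow \bigoplus_{p+q=k} H^{p,q}_{A}(X),
\]
where $\Phi^k$ sends $\sum [\omega^{p,q}]_{BC}$ to $\bigl[\sum \omega^{p,q}\bigr]_{dR}$ (well-defined because $\partial$-closed and $\overline\partial$-closed forms are $d$-closed) and $\Psi^k$ sends $[\omega]_{dR}$ to $\sum [\omega^{p,q}]_A$ (well-defined because the bidegree components of a $d$-exact form lie in $\mathrm{im}\,\partial+\mathrm{im}\,\overline\partial$). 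Rank-nullity yields
\[
\sum_{p+q=k}\dim_{\mathbb{C}} H^{p,q}_{BC}(X) \geq \mathrm{rank}\,\Phi^k,\qquad \sum_{p+q=k}\dim_{\mathbb{C}} H^{p,q}_{A}(X) \geq \mathrm{rank}\,\Psi^k,
\]
so the heart of the matter is to prove $\mathrm{rank}\,\Phi^k + \mathrm{rank}\,\Psi^k \geq 2\,b_k$. To this end I would exhibit Varouchas-style exact sequences on the double complex $(A^{\bullet,\bullet}(X),\partial,\overline\partial)$ that identify $\mathrm{coker}\,\Phi^k$ with a subspace of $\ker\Psi^k$, or equivalently control both defects by the same finite-dimensional auxiliary space built from the $\partial\overline\partial$-exact forms, so that the failure of $\Phi^k$ to be surjective is exactly absorbed by the failure of $\Psi^k$ to be injective.

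For the equivalence, the ``only if'' direction is immediate from \cite[Lemma 5.15]{deligne-griffiths-morgan-sullivan}: if the $\partial\overline\partial$-lemma holds, every natural map in the diagram of cohomologies is an isomorphism, so $\sum_{p+q=k}\dim_{\mathbb{C}} H^{p,q}_{BC}(X) = b_k$ and analogously for Aeppli, whence $\Delta^k(X)=0$. For the converse, assume $\Delta^k(X)=0$ for every $k$; then every inequality in the count above must be saturated, so every intermediate kernel and cokernel appearing in the Varouchas sequences is forced to vanish. This in turn forces each of the natural maps between $H_{BC}$, $H_\partial$, $H_{\overline\partial}$, $H_{dR}$, and $H_A$ to be an isomorphism, and by DGMS this is equivalent to the $\partial\overline\partial$-lemma.

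The main technical obstacle is the precise construction and bookkeeping of the Varouchas-type exact sequences: one needs to identify each kernel and cokernel of the natural maps relating the four cohomologies with explicit finite-dimensional ``correction'' spaces, and to verify that these corrections sum (with the correct signs) to exactly $\Delta^k(X)$, so that its vanishing really forces every individual map to be an isomorphism rather than merely making a global alternating sum vanish.
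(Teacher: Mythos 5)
There is a genuine gap, and it sits exactly where you locate ``the heart of the matter''. Since $\Phi^k$ takes values in $H^k_{dR}(X;\mathbb{C})$ and $\Psi^k$ is defined on it, one always has $\mathrm{rank}\,\Phi^k\leq b_k$ and $\mathrm{rank}\,\Psi^k\leq b_k$, so the inequality $\mathrm{rank}\,\Phi^k+\mathrm{rank}\,\Psi^k\geq 2\,b_k$ that you propose to prove is equivalent to asking that $\Phi^k$ be surjective \emph{and} $\Psi^k$ be injective. Surjectivity of $\Phi^k$ means that every degree-$k$ de Rham class is represented by a sum of $d$-closed pure-type forms; this is the $\mathcal{C}^\infty$-fullness condition of Li--Zhang, which is a nontrivial extra hypothesis and is known to fail on compact complex manifolds (e.g.\ on suitable small deformations of the Iwasawa manifold). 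Via rank--nullity your claim would also deliver the two \emph{separate} inequalities $\sum_{p+q=k}\dim_\mathbb{C}H^{p,q}_{BC}(X)\geq b_k$ and $\sum_{p+q=k}\dim_\mathbb{C}H^{p,q}_{A}(X)\geq b_k$, which are strictly stronger than the theorem and are not consequences of the structure of a double complex: a zig-zag given by two arrows $\overline\partial\alpha_1=\beta=\partial\alpha_2$ with $\alpha_1,\alpha_2$ of total degree $k$ contributes $1$ to $b_k$, $2$ to the Aeppli sum in degree $k$ and $0$ to the Bott-Chern sum in degree $k$ --- only the sum of the two is controlled. Finally, the mechanism you suggest for closing the argument points the wrong way: an embedding $\mathrm{coker}\,\Phi^k\hookrightarrow\ker\Psi^k$ gives $\mathrm{rank}\,\Psi^k\leq\mathrm{rank}\,\Phi^k$ and yields no lower bound on $\mathrm{rank}\,\Phi^k+\mathrm{rank}\,\Psi^k$.

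The actual proof does not route through $H^\bullet_{dR}$ with these two maps at all. Varouchas' exact sequences are used to prove the bidegree-wise inequality $\dim_\mathbb{C}H^{p,q}_{BC}(X)+\dim_\mathbb{C}H^{p,q}_{A}(X)\geq\dim_\mathbb{C}H^{p,q}_{\overline\partial}(X)+\dim_\mathbb{C}H^{p,q}_{\partial}(X)$; summing over $p+q=k$ and applying the Fr\"olicher inequality to \emph{both} the Dolbeault and the conjugate-Dolbeault spectral sequences gives $\Delta^k(X)\geq 0$. For the characterization, the vanishing of all $\Delta^k(X)$ forces equality at both steps, i.e.\ $E_1$-degeneration of the two Fr\"olicher spectral sequences together with the vanishing of the correction spaces appearing in Varouchas' sequences, and it is this combination that is shown to be equivalent to the $\partial\overline\partial$-lemma. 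Your ``only if'' direction via \cite{deligne-griffiths-morgan-sullivan} is correct, but the inequality and the converse both need to be rebuilt on the Dolbeault comparison rather than on surjectivity/injectivity of the maps to and from de Rham cohomology.
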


It provides a lower bound for the dimension of the Bott-Chern and Aeppli cohomologies in terms of the Betti numbers
(the proof actually shows a lower bound also in terms of the Hodge numbers), and it yields also a 
quantitative characterization of the $\partial\overline\partial$-Lemma.
The proof of this Theorem is essentially algebraic and it is based on Varouchas
exact sequences \cite{varouchas}.
The idea relies on the fact that the Dolbeault cohomology is computed by looking at vertical arrows in a double complex and its conjugate by looking at horizontal arrows.
Nevertheless the Bott-Chern and the Aeppli cohomologies compute the number of
ingoing and outgoing corners therefore, by combinatoric arguments, one gets that
the dimensions of the Bott-Chern and Aeppli cohomology groups are greater or equal
than the sum of Hodge numbers and their conjugates, which are 
greater or equal than the Betti numbers by Fr\"ohlicher.
As a corollary one gets also the stability of the $\del\delbar$-lemma under small
deformations of the complex structure
(see also \cite{voisin} and \cite{wu} for different proofs).
In \cite{angella-tomassini-algebraic} a generalization to double complexes is
developed, with applications to compact symplectic manifolds.\\

\begin{rem}\label{dimensione2}
Consider the special case when $X$ is a compact complex surface, i.e., $\dim_{\mathbb{C}}X=2$.
By duality the non-negative numbers $\Delta^1$ and $\Delta^2$ give
all the informations.
Since K\"ahlerness can be topologically characterized in terms of the parity of the first Betti number $b_1$, the K\"ahler condition is then
equivalent to the $\del\delbar$-lemma holding on $X$, leading to the equivalence:
$X$ is K\"ahler if and only if $\Delta^1=\Delta^2=0$.\\
Nevertheless we can be
even more precise, indeed, it is proven in \cite{teleman} that $\Delta^1$ vanishes on any compact complex surface
(see \cite{angella-dlousski-tomassini} for explicit examples). This is not true in higher dimension.
Therefore the number $\Delta^2$ measure the non-K\"ahlerness of a compact
surface:
$$
\mbox{K\"ahler}\qquad\iff\qquad\Delta^2=0.
$$
In general, on surfaces Teleman in \cite{teleman} proves that there are only two options for $\Delta^2$: it is either $0$ or $2$. For a generalization in higher dimension see \cite{angella-tomassini-verbitsky}.
\end{rem}

We have seen above that the Bott-Chern and the Aeppli numbers
dominate the Hodge numbers and then, by Fr\"ohlicher the Betti numbers.
In joint work with Angella in \cite{angella-tardini} (see also \cite{angella-survey}) we prove that they are also dominated by Hodge numbers.

\begin{theorem}[{\cite[Theorem 2.1, Remark 2.2]{angella-tardini}}]\label{complex-inequalities}
 Let $X$ be a compact complex manifold of complex dimension $n$. Then, for any $k\in\Z$,
 \begin{eqnarray*}
 \lefteqn{ \sum_{p+q=k} \dim_\mathbb{C} H^{p,q}_{A}(X) } \\[5pt]
 &\leq& \min\{k+1, (2n-k)+1\} \cdot \left( \sum_{p+q=k} \dim_\C H^{p,q}_{\overline\partial}(X) + \sum_{p+q=k+1} \dim_\C H^{p,q}_{\overline\partial}(X) \right) \\[5pt]
 &\leq& (n+1) \cdot \left( \sum_{p+q=k} \dim_\C H^{p,q}_{\overline\partial}(X) + \sum_{p+q=k+1} \dim_\C H^{p,q}_{\overline\partial}(X) \right) \;,
 \end{eqnarray*}
 and
 \begin{eqnarray*}
 \lefteqn{ \sum_{p+q=k} \dim_\C H^{p,q}_{BC}(X) } \\[5pt]
 &\leq& \min\{k+1, (2n-k)+1\} \cdot \left( \sum_{p+q=k} \dim_\C H^{p,q}_{\overline\partial}(X) + \sum_{p+q=k-1} \dim_\C H^{p,q}_{\overline\partial}(X) \right) \\[5pt]
 &\leq& (n+1) \cdot \left( \sum_{p+q=k} \dim_\C H^{p,q}_{\overline\partial}(X) + \sum_{p+q=k-1} \dim_\C H^{p,q}_{\overline\partial}(X) \right) \;.
 \end{eqnarray*}
\end{theorem}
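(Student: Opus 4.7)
The plan is to prove the Aeppli inequality first; the Bott--Chern version then follows by combining the Hodge-$*$ duality $\dim_{\mathbb{C}} H^{p,q}_{BC}(X) = \dim_{\mathbb{C}} H^{n-p,n-q}_{A}(X)$ (recalled in the excerpt) with Serre duality for Dolbeault cohomology $\dim_{\mathbb{C}} H^{r,s}_{\overline\partial}(X) = \dim_{\mathbb{C}} H^{n-r,n-s}_{\overline\partial}(X)$. Applying the Aeppli inequality at total degree $2n-k$ and translating both sides via these dualities converts the Dolbeault sums over total degrees $2n-k$ and $2n-k+1$ into the Dolbeault sums over $k$ and $k-1$ appearing in the Bott--Chern statement; the symmetric factor $\min\{k+1,2n-k+1\}$ is preserved under $k\mapsto 2n-k$.

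For the Aeppli bound, the main tool is the well-defined $\mathbb{C}$-linear map
\[
\phi^{p,q}: H^{p,q}_A(X) \longrightarrow H^{p,q+1}_{\overline\partial}(X), \qquad [\alpha]_A \longmapsto [\overline\partial\alpha]_{\overline\partial}.
\]
Well-definedness: $\overline\partial\alpha$ is trivially $\overline\partial$-closed, and the ambiguity $\alpha\sim \alpha+\partial\beta+\overline\partial\gamma$ alters $\overline\partial\alpha$ by $\overline\partial\partial\beta=\overline\partial(\partial\beta)$, which is $\overline\partial$-exact. One has $\dim\mathrm{im}\,\phi^{p,q}\leq \dim H^{p,q+1}_{\overline\partial}(X)$, and the kernel $\ker\phi^{p,q}$ contains the image of the identity-induced map $\iota^{p,q}: H^{p,q}_{\overline\partial}(X)\to H^{p,q}_A(X)$. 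The Varouchas-type short exact sequences (see \cite{varouchas}) relating Aeppli, Dolbeault, and Bott--Chern cohomology then bound the defect $\ker\phi^{p,q}/\mathrm{im}\,\iota^{p,q}$ in terms of neighbouring Dolbeault dimensions at total degrees $k$ and $k+1$.

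Combining these bounds produces, for each individual bidegree $(p,q)$ with $p+q=k$, a pointwise estimate of the form
\[
\dim_{\mathbb{C}} H^{p,q}_A(X) \leq \sum_{r+s=k} \dim_{\mathbb{C}} H^{r,s}_{\overline\partial}(X) + \sum_{r+s=k+1} \dim_{\mathbb{C}} H^{r,s}_{\overline\partial}(X).
\]
Summing this over the $\min\{k+1,2n-k+1\}$ admissible bidegrees $(p,q)$ on the antidiagonal $p+q=k$ with $0\leq p,q\leq n$ produces the stated factor; the cruder bound with $n+1$ follows from $\min\{k+1,2n-k+1\}\leq n+1$. The main obstacle I anticipate is the kernel analysis of $\phi^{p,q}$: since an Aeppli class annihilated by $\overline\partial$ need not admit a $\overline\partial$-closed representative within its Aeppli class, the identification of $\ker\phi^{p,q}$ with Dolbeault data requires careful navigation of Varouchas' exact sequences, or, equivalently, passage to the Khovanov--Stelzig decomposition of the double complex $(A^{\bullet,\bullet}(X),\partial,\overline\partial)$ into zigzag and square indecomposables, in which the contributions to Aeppli and Dolbeault are combinatorially linked.
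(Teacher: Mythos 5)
Your reduction of the Bott--Chern inequality to the Aeppli one (Hodge-$*$ duality $\dim H^{p,q}_{BC}=\dim H^{n-p,n-q}_{A}$ plus Serre duality, with the factor $\min\{k+1,2n-k+1\}$ invariant under $k\mapsto 2n-k$) is fine. The problem is your main tool: the map $\phi^{p,q}([\alpha]_A)=[\overline\partial\alpha]_{\overline\partial}$ is identically zero, because $\overline\partial\alpha$ lies in $\mathrm{Im}\,\overline\partial$ by its very shape and hence represents the trivial class in $H^{p,q+1}_{\overline\partial}(X)$; your well-definedness check is consistent with this, since a well-defined map can be the zero map. The form $\overline\partial\alpha$ carries information only in the \emph{conjugate} Dolbeault cohomology $H^{p,q+1}_{\partial}(X)$ (it is $\partial$-closed because $\partial\overline\partial\alpha=0$, and need not be $\partial$-exact), or in the Varouchas subquotient $(\mathrm{Im}\,\overline\partial\cap\ker\partial)/\mathrm{Im}\,\partial\overline\partial$. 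So as written you get $\ker\phi^{p,q}=H^{p,q}_A(X)$ and no estimate at all. Any repair must bring $H^{\bullet,\bullet}_{\partial}(X)$ into the picture and then use conjugation, namely $\sum_{p+q=k}\dim H^{p,q}_{\partial}(X)=\sum_{p+q=k}\dim H^{p,q}_{\overline\partial}(X)$ (an identity that holds only after summing over an antidiagonal), to convert back to the Hodge numbers appearing in the statement; neither ingredient appears in your write-up.

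Even with the corrected target, the whole content of the theorem sits in what you call the ``defect,'' and you do not bound it: you defer to ``careful navigation of Varouchas' exact sequences'' or to the zigzag decomposition. That defect is genuinely large. In the zigzag picture, every \emph{interior} outgoing corner of a zigzag lies in the kernel of the corrected maps to $H^{p,q+1}_{\partial}(X)\oplus H^{p+1,q}_{\overline\partial}(X)$ without coming from $H^{p,q}_{\overline\partial}(X)\oplus H^{p,q}_{\partial}(X)$, and a single zigzag can carry up to $\min\{k+1,2n-k+1\}$ outgoing corners on the antidiagonal $p+q=k$ while contributing only its two extremal points to Dolbeault or conjugate-Dolbeault cohomology in total degrees $k$ and $k+1$. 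The count ``at most one outgoing corner per lattice point of the antidiagonal versus two endpoints per zigzag'' is exactly the paper's argument (via the decomposition of the bounded double complex into squares and zigzags) and is the sole source of the factor $\min\{k+1,2n-k+1\}$. In short: the parts of your proposal that are correct (the duality reduction, the final summation over the antidiagonal) are the routine parts, while the step that would actually prove the theorem is either vacuous (the zero map) or outsourced to the very decomposition the paper uses.
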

\begin{proof}
The proof is essentially algebraic and, for example, the idea behind the first inequality is obtained by thinking that the outgoing corners in a zig-zag contribute to the Aeppli cohomology
and the extremal points of a zig-zag to the Dolbeault cohomology and/or its conjugate.
Therefore, for any outgoing corners we have two extremal points and the number
of outgoing corners depends on the length of the zig-zag.
For a detailed proof we refer to \cite{angella-tardini}
(see also \cite{angella-survey}).
\qed
\end{proof}
A similar result holds in case of double complexes
under some additional hypothesis of boundedness, leading to a similar
result in symplectic geometry.

\subsection{A characterization of the $\partial\overline\partial$-lemma}

By the above inequalities we then get that the difference $\sum_{p+q=k} \left( \dim_\C H^{p,q}_{BC}(X) - \dim_\C H^{p,q}_{A}(X) \right)$ is bounded from both above and below by the Hodge numbers. In \cite{angella-tardini} together with Angella we prove that there is
also a characterization of the $\partial\overline\partial$-Lemma in terms of this quantity.

\begin{theorem}[{\cite[Theorem 3.1]{angella-tardini}}]\label{characterization deldelbarlemma}
 A compact complex manifold $X$ satisfies the $\partial\overline\partial$-Lemma if and only if, for any $k\in\Z$, there holds
 $$ \sum_{p+q=k} \left( \dim_\C H^{p,q}_{BC}(X) - \dim_\C H^{p,q}_{A}(X) \right) \;=\; 0 \;. $$
\end{theorem}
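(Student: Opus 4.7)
The forward direction is immediate: if the $\partial\overline\partial$-Lemma holds on $X$, then by the already-cited \cite[Lemma 5.15]{deligne-griffiths-morgan-sullivan} every natural map in the cohomology diagram is an isomorphism, so $\dim_\C H_{BC}^{p,q}(X)=\dim_\C H_A^{p,q}(X)$ in each bidegree and the alternating sum vanishes term-by-term. The content of the theorem is therefore the converse.

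For the converse, my plan is to exploit the structure theorem for bounded double complexes (Khovanov--Stelzig), precisely the decomposition alluded to in the excerpt when the author mentions interpreting a double complex as a sum of zig-zags, dots and squares. After standard reduction to a finite-dimensional model (for instance, the space of harmonic representatives, which suffices because the Schweitzer theory recalled earlier makes $H_{BC}$ and $H_A$ finite-dimensional), $(A^{\bullet,\bullet}(X),\partial,\overline\partial)$ decomposes as a direct sum of indecomposables of three kinds: \emph{dots} $D^{p,q}$, \emph{squares} (which contribute $0$ to both $H_{BC}^{\bullet,\bullet}$ and $H_A^{\bullet,\bullet}$), and longer \emph{zig-zags}. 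In any zig-zag of length $\geq 2$ the arrows alternate between $\partial$ and $\overline\partial$, so the basis elements partition into \emph{sources} (outgoing corners, all at one common total degree $d$) and \emph{sinks} (ingoing corners, all at total degree $d+1$); sources contribute to $H_A$ at degree $d$ and sinks to $H_{BC}$ at degree $d+1$, while a dot at $(p,q)$ contributes equally ($+1$) to $H_{BC}^{p,q}$ and $H_A^{p,q}$ and hence cancels in the difference. The key equivalence that drives the argument is that the $\partial\overline\partial$-Lemma holds if and only if \emph{no zig-zag of length $\geq 2$ appears} in the decomposition.

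Indexing the non-dot, non-square zig-zags by $i$, with source degree $d_i\geq 0$, source count $S_i\geq 1$ and sink count $T_i\geq 1$, the hypothesis $\sum_{p+q=k}\bigl(\dim_\C H_{BC}^{p,q}-\dim_\C H_A^{p,q}\bigr)=0$ for every $k$ becomes, once the dots have been cancelled, the coefficient-wise identity
\[
\sum_{i\,:\,d_i=k-1} T_i \;=\; \sum_{i\,:\,d_i=k} S_i, \qquad k\in\Z.
\]
If any such $i$ existed, setting $k=d^{\ast}:=\min_i d_i$ would make the left-hand side empty (hence zero) while the right-hand side would be strictly positive, a contradiction. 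Therefore only dots and squares occur in the decomposition, and the $\partial\overline\partial$-Lemma follows.

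The hardest step, I expect, is not the arithmetic but the careful set-up: one must invoke the Khovanov--Stelzig decomposition on a legitimate finite-dimensional model, and rigorously justify the per-bidegree bookkeeping that identifies sources with $H_A$-contributions at their common total degree and sinks with $H_{BC}$-contributions one degree higher. Once that dictionary is in hand, the minimality argument above closes the proof in a single line.
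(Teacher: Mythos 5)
Your argument is correct, but it takes a genuinely different route from the one the paper actually uses. The paper's precise proof (from Angella--Tardini) goes through the Varouchas exact sequences and, as the author stresses, is \emph{not} purely algebraic: complex conjugation is needed, which is why no analogous statement survives in the symplectic setting. What you do instead is make rigorous the informal corner-counting picture that the paper only sketches, by invoking the decomposition of a bounded double complex into squares and zigzags (Khovanov--Stelzig, which postdates the original theorem). Your dictionary is accurate: on a zigzag of length $\geq 2$ both $\partial\overline\partial$ and its image vanish, the sinks span $\ker\partial\cap\ker\overline\partial$ and the sources span the complement of $\mathrm{Im}\,\partial+\mathrm{Im}\,\overline\partial$, so the zigzag contributes $T_i$ to $H_{BC}$ in total degree $d_i+1$ and $S_i$ to $H_A$ in total degree $d_i$, while dots cancel in the difference and squares contribute nothing; the evaluation at $k=\min_i d_i$ is then exactly the induction the paper gestures at (``in degree $0$ there are no ingoing corners''). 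What your approach buys is a purely algebraic statement valid for any first-quadrant bounded double complex with finite-dimensional $H_{BC}$ and $H_A$, and it isolates precisely the hypothesis that fails in the symplectic case: the strip-shaped double complex $B^{\bullet,\bullet}$ is unbounded along the diagonal, so $\min_i d_i$ need not exist --- consistent with the remark that $h^{\bullet}_{d+d^\Lambda}=h^{\bullet}_{dd^\Lambda}$ always holds there. Two points to tighten. First, you neither need nor can use a ``finite-dimensional model via harmonic representatives'': harmonic forms do not form a sub-double-complex. The decomposition theorem applies directly to the infinite-dimensional bounded complex $A^{\bullet,\bullet}(X)$, and the sums in your displayed identity are finite simply because every non-square indecomposable summand contributes at least one dimension to $H_{BC}(X)$, which is finite-dimensional by the elliptic theory recalled in Section 2. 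Second, the ``key equivalence'' ($\partial\overline\partial$-Lemma if and only if no zigzags of length $\geq 2$ occur) should be cited or proved; one direction is the observation that on dots the map $H_{BC}\to H_A$ is the identity and on squares both groups vanish, and the other is that any sink of a longer zigzag is $\partial$- and $\overline\partial$-closed and $\partial$- or $\overline\partial$-exact, yet not $\partial\overline\partial$-exact.
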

\begin{proof}
The first implication is trivial. For the other one
notice that, roughly speaking, the vanishing of the numbers
$\sum_{p+q=k} \left( \dim_\C H^{p,q}_{BC}(X) - \dim_\C H^{p,q}_{A}(X) \right)$
means that the number of ingoing corners is equal to the number
of outgoing corners on any diagonal of the same total degree; since in degree $0$ we do not have ingoing corners then
we do not have any arrows in the picture of the double complex and therefore the $\del\delbar$-lemma
holds on $X$.
Nevertheless the precise proof of Theorem \ref{characterization deldelbarlemma} is based on Varouchas exact sequences \cite{varouchas} but it is not algebraic, indeed conjugation is needed; a similar result
cannot be expected in the symplectic case.
\qed
\end{proof}
\begin{rem}
This result means that on a compact complex manifold a non canonical isomorphism between the Bott-Chern and the Aeppli
cohomology forces all the natural maps in the cohomology diagram
to be isomorphisms and so these cohomologies are not providing additional informations on the manifold.
By the Schweitzer duality between the Bott-Chern and the Aeppli cohomology \cite[\S2.c]{schweitzer}, the above condition can be written just in terms of the Bott-Chern cohomology as follows: for any $k\in\Z$, there holds
$$ \sum_{p+q=k} \dim_\mathbb{C} H^{p,q}_{BC}(X) \;=\; \sum_{p+q=2n-k} \dim_\mathbb{C} H^{p,q}_{BC}(X) \;, $$
namely there is a symmetry in the Bott-Chern numbers.
The study of this property was initially motivated by the development of Sullivan theory of formality
in the context of Bott-Chern cohomology (see \cite{angella-tomassini-formality} and
\cite{tardini-tomassini-formality} for results in this direction).
\end{rem}

Notice that there exist special classes of complex manifolds where
the dimensions of the Bott-Chern (and by duality Aeppli) cohomology groups
can be computed explicitly by means of suitable sub-complexes of the complex of forms (see \cite{angella-kasuya-solvmanifolds})
making this result concrete in studying the $\del\delbar$-lemma.

\section{Symplectic cohomologies}

We consider now the symplectic case and we show that similar results hold
in this setting. 
Let $(X,\omega)$ be a compact symplectic manifold
of dimension $2n$, then Tseng and Yau in \cite{tsengyauI} define a symplectic version of the Bott-Chern and the Aeppli cohomology groups. Denoting with $A^\bullet(X)$ the space of differential forms on $X$,
the \emph{symplectic-$\star$-Hodge operator} (see \cite{brylinski})
$\star:A^\bullet(X)\longrightarrow A^{2n-\bullet }(X)$
is defined as follows: given $\beta\in A^k(X)$, for any $\alpha\in A^k(X)$ there
holds $\alpha\wedge\star\beta\;=\;(\omega^{-1})^k(\alpha,\beta)\;\omega^n$,
where on simple elements
$(\omega^{-1})^k(\alpha^1\wedge\ldots\wedge\alpha^k,\beta^1
\wedge\ldots\wedge\beta^k):=\det\left(\omega^{-1}(\alpha^i,\beta^j)\right)_{i,j}$.\\
The \emph{Brylinski co-differential} is defined as
\[
d^\Lambda:=\left[d,\Lambda\right]=d\Lambda-\Lambda d=(-1)^{k+1}\star d\star\;,
\]
where
$\Lambda:A^\bullet(X)\longrightarrow A^{\bullet -2}(X)$ is the adjoint of the
Lefschetz operator $L=\omega\wedge-:A^\bullet(X)\longrightarrow A^{\bullet +2}(X)$.
By definition $d^\Lambda:A^\bullet(X)\longrightarrow A^{\bullet -1}(X)$ and
the following relations hold: $\left(d^\Lambda\right)^2=0$ and 
$dd^\Lambda +d^\Lambda d=0$.\\
Notice that the operator $dd^\Lambda +d^\Lambda d$ is not the analogue
of the de-Rham Laplacian in the classical Riemannian Hodge theory because
it is not elliptic (it is always zero!) and we should think at $d^\Lambda$
as the analogue of the operator $d^c$ in complex geometry (actually
they are deeply related once fixed a compatible triple, see
\cite{tsengyauI} for more details).\\
Then, for $k\in\mathbb{Z}$, (see \cite{tsengyauI}) the \emph{$d^\Lambda$-cohomology groups} are
\[
H^k_{d^\Lambda}\left(X\right)
:=\frac{Ker(d^\Lambda)\cap A^k(X)}{Im \;d^\Lambda\cap A^k(X)},
\]
the \emph{symplectic Bott-Chern cohomology groups} are
\[
H^k_{d+d^\Lambda}\left(X\right)
:=\frac{Ker(d+d^\Lambda)\cap A^k(X)}{Im\; dd^\Lambda\cap A^k(X)}
\]
and the \emph{symplectic Aeppli cohomology groups} are
\[
H^k_{dd^\Lambda}\left(X\right)
:=\frac{Ker(dd^\Lambda)\cap A^k(X)}{\left(Im\; d+Im\; d^\Lambda\right)\cap A^k(X)}.
\]
By construction they are invariant under symplectomorphisms and so they are
good symplectic cohomologies encoding global invariants. For similar definitions
in the locally conformal symplectic setting see \cite{angella-otiman-tardini}.\\
Moreover these cohomology groups have been introduced because in
symplectic geometry the de Rham cohomology is not the appropriate
one when talking about Hodge theory.\\
Consider a compatible triple $(\omega,J,g)$ on $X$, namely
\begin{itemize}
\item{$J$ is a \emph{$\omega$-compatible almost-complex structure}, i.e.,
\begin{itemize}
\item{$\omega$ is positive on the $J$-complex lines, $\omega(\cdot, J\cdot)>0$;}
\item{$\omega$ is $J$-invariant, $\omega(J\cdot,J\cdot\cdot)=\omega(\cdot,\cdot\cdot)$;}
\end{itemize}}
\item{$g$ is the \emph{corresponding Riemannian metric} on $X$ defined by $g(\cdot,\cdot\cdot):=\omega(\cdot,J\cdot\cdot)$.}
\end{itemize} 
Denoting with $*$ the standard \emph{Hodge-operator}
with respect to the Riemannian metric $g$,
there are canonical isomorphisms (see \cite{tsengyauI})
\[
\mathcal{H}^k_{d^\Lambda}\left(X\right):=\ker\Delta_{d^\Lambda}
\simeq H^k_{d^\Lambda}\left(X\right),
\]
where $\Delta_{d^{\Lambda}} :=  d^{\Lambda*}d^\Lambda+d^\Lambda d^{\Lambda*}$ is a second-order elliptic self-adjoint
differential operator and
\[
\mathcal{H}^k_{d+d^\Lambda}\left(X\right):=\ker\Delta_{d+d^\Lambda}
\simeq H^k_{d+d^\Lambda}\left(X\right),\qquad
\mathcal{H}^k_{dd^\Lambda}\left(X\right):=\ker\Delta_{dd^\Lambda}
\simeq H^k_{dd^\Lambda}\left(X\right).
\]
where
$\Delta_{d+d^{\Lambda}}$, $\Delta_{dd^{\Lambda}}$ are fourth-order elliptic self-adjoint
differential operators defined by
\[
\begin{array}{lcl}
\Delta_{d+d^{\Lambda}}& := &(dd^{\Lambda})(dd{^\Lambda})^*+(dd^{\Lambda})^*(dd^{\Lambda})+
d^*d^{\Lambda} d^{\Lambda *}d+d^{\Lambda *}d d^*d^{\Lambda}+d^*d+d^{\Lambda *}d^{\Lambda},\\[10pt]
\Delta_{dd^{\Lambda}} & :=& (dd^{\Lambda})(dd{^\Lambda})^*+(dd^{\Lambda})^*(dd^{\Lambda})+
dd^{\Lambda *}d^\Lambda d^*+d^\Lambda d^*dd^{\Lambda *}+dd^*
+d^\Lambda d^{\Lambda *}.
\end{array}
\]
In particular, the symplectic cohomology groups are finite-dimensional vector spaces on a compact
symplectic manifold. For
$\sharp\in\left\{d^\Lambda,d+d^\Lambda,
dd^\Lambda\right\}$ we set $h^\bullet_\sharp:=:h^\bullet_\sharp(X):= \dim H^\bullet_\sharp(X)<\infty$ when the manifold $X$ is understood.\\
Similarly to the classical Hodge theory the differential forms closed both for the operators
$d$ and $d^\Lambda$ were called by Brylinski
\emph{symplectic harmonic} (\cite{brylinski}).
The existence of a symplectic harmonic form in each de Rham
cohomology class does not occur in general. As regards uniqueness there is no
hope, indeed on any symplectic manifold $(X,\omega)$ if $\alpha\in A^1(X)$
is symplectic-harmonic then $\alpha+df$ is still symplectic-harmonic, for any
smooth function $f$ on $X$, because $d^\Lambda(\alpha+df)=
d^\Lambda df=d\Lambda df=0$ for degree reasons.\\
In particular, the following facts are equivalent on a compact symplectic
manifold $(X^{2n},\omega)$ (cf. \cite{brylinski}, \cite{mathieu}, \cite{merkulov},
\cite{yan}, \cite{cavalcanti})
\begin{itemize}
\item{} the \emph{hard-Lefschetz condition} (HLC for short) holds, i.e., the maps
\[
L^k:H^{n-k}_{dR}(X)\longrightarrow H^{n+k}_{dR}(X), \qquad 0\leq k\leq n
\]
are all isomorphisms;
\item{} the \emph{Brylinski conjecture}, i.e., the existence of a
symplectic harmonic form in each de Rham cohomology class;
\item{} the \emph{$dd^\Lambda$-lemma}, i.e., every $d^\Lambda$-closed, $d$-exact form
is also $dd^\Lambda$-exact;
\item{} the natural maps induced by the identity $H^{\bullet}_{d+d^\Lambda}(X)
\longrightarrow H^{\bullet}_{dR}(X)$ are injective;
\item{} the natural maps induced by the identity $H^{\bullet}_{d+d^\Lambda}(X)
\longrightarrow H^{\bullet}_{dR}(X)$ are surjective;
\item{} the natural maps induced by the identity in the following diagram are isomorphisms
$$ \xymatrix{
  & H^{\bullet}_{d+d^\Lambda}(X) \ar[ld]\ar[rd] & \\
  H^{\bullet}_{dR}(X) \ar[rd] &  & H^{\bullet}_{d^\Lambda}(X). \ar[ld] \\
  & {\phantom{\;.}} H^{\bullet}_{dd^\Lambda}(X) \; &
} $$
\end{itemize}
In this sense $H^{\bullet}_{d+d^\Lambda}\left(X\right)$ and
$H^{\bullet}_{dd^\Lambda}\left(X\right)$ represent more appropriate cohomologies
talking about existence and uniqueness of harmonic representatives
on symplectic manifolds.\\
Nevertheless, in general, on a symplectic manifold of dimension $2n$ the following maps
are all isomorphisms (see \cite[Prop. 3.24]{tsengyauI})
$$\xymatrixcolsep{5pc}\xymatrix{
\mathcal{H}^{k}_{d+d^\Lambda}\left(X\right) \ar[d]^{L^{n-k}} \ar[r]^{*} &
\mathcal{H}^{2n-k}_{dd^\Lambda}\left(X\right)\ar[d]^{\Lambda^{n-k}}\\
\mathcal{H}^{2n-k}_{d+d^\Lambda}\left(X\right) \ar[r]^{*}   &
\mathcal{H}^{k}_{dd^\Lambda}\left(X\right),
}
$$
in particular, it follows that $h^k_{d+d^\Lambda}=h^{2n-k}_{d+d^\Lambda}=
h^k_{dd^\Lambda}=h^{2n-k}_{dd^\Lambda}$ for all $k=0,\ldots, 2n$.

\begin{rem}
Note that, as proved in \cite{angella-tardini} (see Theorem \ref{characterization deldelbarlemma}), on a compact complex manifold
the equality between the dimensions of the Bott-Chern cohomology groups
and the Aeppli cohomology groups characterizes the $\del\delbar$-lemma;
nevertheless, the "analogous" condition on a compact symplectic manifold $X$, namely
$h^{\bullet}_{d+d^\Lambda}(X)=
h^{\bullet}_{dd^\Lambda}(X)$, is
always verified.
\end{rem}

\subsection{Inequalities on compact symplectic manifolds}

The proof of Theorem \ref{complex-inequalities} is essentially algebraic and it can
be generalized to double complexes with some hypothesis of boundedness.
For the general statement we refer to \cite{angella-tardini}, here we consider
the application to the symplectic cohomologies.
Let $X$ be a compact manifold of dimension $2n$ endowed with a symplectic structure $\omega$.
As in \cite{brylinski, cavalcanti}, we define the double complex associated to
$(A^\bullet(X),d,d^\Lambda)$ as
$$ \left( B^{\bullet_1,\bullet_2} \;:=\; \wedge^{\bullet_1-\bullet_2}X \otimes \beta^{\bullet_2} ,\; d\otimes\mathrm{id},\; d^\Lambda\otimes\beta \right) \;, $$
where $\beta$ is a generator of the infinite cyclic commutative group $\beta^\Z$. Note that, for any $q\in\Z$, we have
$$ B^{p,q} \;=\; \{0\}, \qquad p\not\in\{q, \ldots, q+2n \} \;, $$
hence there exists a diagonal strip of width $2n+1$ such that the double complex $B^{\bullet,\bullet}$ has support in this strip. 
In the picture below we have an example for $2n=4$.\\

\begin{center}
\begin{tikzpicture}
\newcommand\un{0.9}

\draw[help lines, step=\un] (0,0) grid (7*\un,7*\un);

\foreach \x in {-1,...,5}
  \node at (\un+\un*.5+\un*\x,-.3) {\x};
\foreach \y in {-2,...,4}
  \node at (-.3,\un*2+\un*.5+\un*\y) {\y};

\coordinate (AA) at (0*\un, 1*\un);
\coordinate (A) at (0*\un, 2*\un);
\coordinate (B) at (1*\un, 2*\un);
\coordinate (C) at (1*\un, 3*\un);
\coordinate (D) at (2*\un, 3*\un);
\coordinate (E) at (2*\un, 4*\un);
\coordinate (F) at (3*\un, 4*\un);
\coordinate (G) at (3*\un, 5*\un);
\coordinate (H) at (4*\un, 5*\un);
\coordinate (I) at (4*\un, 6*\un);
\coordinate (II) at (5*\un, 6*\un);
\coordinate (BB) at (5*\un, 7*\un);
\coordinate (CC) at (6*\un, 7*\un);

\coordinate (jj) at (3*\un, 0*\un);
\coordinate (j) at (4*\un, 0*\un);
\coordinate (k) at (4*\un, 1*\un);
\coordinate (l) at (5*\un, 1*\un);
\coordinate (m) at (5*\un, 2*\un);
\coordinate (n) at (6*\un, 2*\un);
\coordinate (o) at (6*\un, 3*\un);
\coordinate (p) at (7*\un, 3*\un);
\coordinate (q) at (7*\un, 4*\un);

\coordinate (r) at (4*\un+1/2*\un, 4*\un+1/2*\un);
\coordinate (s) at (4*\un+1/2*\un+1/8*\un, 4*\un+1/2*\un+1/8*\un);
\coordinate (t) at (4*\un+1/2*\un-1/8*\un, 4*\un+1/2*\un-1/8*\un);

\coordinate (u) at (0*\un+1/2*\un, 0*\un+1/2*\un);
\coordinate (v) at (0*\un+1/2*\un+1/8*\un, 0*\un+1/2*\un+1/8*\un);
\coordinate (z) at (0*\un+1/2*\un-1/8*\un, 0*\un+1/2*\un-1/8*\un);

\newcommand{\raggio}{1*\un pt}
\fill (r) circle (\raggio);
\fill (s) circle (\raggio);
\fill (t) circle (\raggio);
\fill (u) circle (\raggio);
\fill (v) circle (\raggio);
\fill (z) circle (\raggio);

\fill [gray!15] (0+1/50*\un,2*\un+1/50*\un) -- (0+1/50*\un,3*\un-1/50*\un) -- (1*\un-1/50*\un,3*\un-1/50*\un) -- (1*\un-1/50*\un,2*\un+1/50*\un);
\fill [gray!15] (0+1/50*\un,3*\un+1/50*\un) -- (0+1/50*\un,4*\un-1/50*\un) -- (1*\un-1/20*\un,4*\un-1/50*\un) -- (1*\un-1/20*\un,3*\un+1/50*\un);
\fill [gray!15] (0+1/50*\un,4*\un+1/50*\un) -- (0+1/50*\un,5*\un-1/50*\un) -- (1*\un-1/50*\un,5*\un-1/50*\un) -- (1*\un-1/50*\un,4*\un+1/50*\un);
\fill [gray!15] (0+1/50*\un,5*\un+1/50*\un) -- (0+1/50*\un,6*\un-1/50*\un) -- (1*\un-1/50*\un,6*\un-1/50*\un) -- (1*\un-1/50*\un,5*\un+1/50*\un);
\fill [gray!15] (0+1/50*\un,6*\un+1/50*\un) -- (0+1/50*\un,7*\un-1/50*\un) -- (1*\un-1/20*\un,7*\un-1/50*\un) -- (1*\un-1/20*\un,6*\un+1/50*\un);

\fill [gray!15] (1*\un+1/50*\un,3*\un+1/50*\un) -- (1*\un+1/50*\un,4*\un-1/50*\un) -- (2*\un-1/20*\un,4*\un-1/50*\un) -- (2*\un-1/20*\un,3*\un+1/50*\un);
\fill [gray!15] (1*\un+1/50*\un,4*\un+1/50*\un) -- (1*\un+1/50*\un,5*\un-1/50*\un) -- (2*\un-1/20*\un,5*\un-1/50*\un) -- (2*\un-1/20*\un,4*\un+1/50*\un);
\fill [gray!15] (1*\un+1/50*\un,5*\un+1/50*\un) -- (1*\un+1/50*\un,6*\un-1/50*\un) -- (2*\un-1/20*\un,6*\un-1/50*\un) -- (2*\un-1/20*\un,5*\un+1/50*\un);
\fill [gray!15] (1*\un+1/50*\un,6*\un+1/50*\un) -- (1*\un+1/50*\un,7*\un-1/50*\un) -- (2*\un-1/20*\un,7*\un-1/50*\un) -- (2*\un-1/20*\un,6*\un+1/50*\un);

\fill [gray!15] (2*\un+1/50*\un,4*\un+1/50*\un) -- (2*\un+1/50*\un,5*\un-1/50*\un) -- (3*\un-1/20*\un,5*\un-1/50*\un) -- (3*\un-1/20*\un,4*\un+1/50*\un);
\fill [gray!15] (2*\un+1/50*\un,5*\un+1/50*\un) -- (2*\un+1/50*\un,6*\un-1/50*\un) -- (3*\un-1/20*\un,6*\un-1/50*\un) -- (3*\un-1/20*\un,5*\un+1/50*\un);
\fill [gray!15] (2*\un+1/50*\un,6*\un+1/50*\un) -- (2*\un+1/50*\un,7*\un-1/50*\un) -- (3*\un-1/20*\un,7*\un-1/50*\un) -- (3*\un-1/20*\un,6*\un+1/50*\un);

\fill [gray!15] (3*\un+1/50*\un,5*\un+1/50*\un) -- (3*\un+1/50*\un,6*\un-1/50*\un) -- (4*\un-1/20*\un,6*\un-1/50*\un) -- (4*\un-1/20*\un,5*\un+1/50*\un);
\fill [gray!15] (3*\un+1/50*\un,6*\un+1/50*\un) -- (3*\un+1/50*\un,7*\un-1/50*\un) -- (4*\un-1/20*\un,7*\un-1/50*\un) -- (4*\un-1/20*\un,6*\un+1/50*\un);

\fill [gray!15] (4*\un+1/50*\un,6*\un+1/50*\un) -- (4*\un+1/50*\un,7*\un-1/50*\un) -- (5*\un-1/20*\un,7*\un-1/50*\un) -- (5*\un-1/20*\un,6*\un+1/50*\un);

\fill [gray!15] (4*\un+1/50*\un,0*\un+1/50*\un) -- (4*\un+1/50*\un,1*\un-1/50*\un) -- (5*\un-1/20*\un,1*\un-1/50*\un) -- (5*\un-1/20*\un,0*\un+1/50*\un);
\fill [gray!15] (5*\un+1/50*\un,0*\un+1/50*\un) -- (5*\un+1/50*\un,1*\un-1/50*\un) -- (6*\un-1/20*\un,1*\un-1/50*\un) -- (6*\un-1/20*\un,0*\un+1/50*\un);
\fill [gray!15] (6*\un+1/50*\un,0*\un+1/50*\un) -- (6*\un+1/50*\un,1*\un-1/50*\un) -- (7*\un-1/20*\un,1*\un-1/50*\un) -- (7*\un-1/20*\un,0*\un+1/50*\un);

\fill [gray!15] (5*\un+1/50*\un,1*\un+1/50*\un) -- (5*\un+1/50*\un,2*\un-1/50*\un) -- (6*\un-1/20*\un,2*\un-1/50*\un) -- (6*\un-1/20*\un,1*\un+1/50*\un);
\fill [gray!15] (6*\un+1/50*\un,1*\un+1/50*\un) -- (6*\un+1/50*\un,2*\un-1/50*\un) -- (7*\un-1/20*\un,2*\un-1/50*\un) -- (7*\un-1/20*\un,1*\un+1/50*\un);

\fill [gray!15] (6*\un+1/50*\un,2*\un+1/50*\un) -- (6*\un+1/50*\un,3*\un-1/50*\un) -- (7*\un-1/20*\un,3*\un-1/50*\un) -- (7*\un-1/20*\un,2*\un+1/50*\un);

\draw (AA) -- (A) -- (B) -- (C) -- (D) -- (E) -- (F) -- (G) -- (H) -- (I) -- (II) -- (BB) -- (CC);
\draw (jj) -- (j) -- (k) -- (l) -- (m) -- (n) -- (o) -- (p) -- (q);

\begingroup\makeatletter\def\f@size{6}\check@mathfonts
\node at (1*\un+1/2*\un-.1+.1, 2*\un+1/2*\un) {$\Lambda^0\otimes\beta^0$};
\node at (2*\un+1/2*\un-.1+.1, 2*\un+1/2*\un) {$\Lambda^1\otimes\beta^0$};
\node at (3*\un+1/2*\un-.1+.1, 2*\un+1/2*\un) {$\Lambda^2\otimes\beta^0$};
\node at (4*\un+1/2*\un-.1+.1, 2*\un+1/2*\un) {$\Lambda^3\otimes\beta^0$};
\node at (5*\un+1/2*\un-.1+.1, 2*\un+1/2*\un) {$\Lambda^4\otimes\beta^0$};

\node at (2*\un+1/2*\un-.1+.1, 3*\un+1/2*\un) {$\Lambda^0\otimes\beta^1$};
\node at (3*\un+1/2*\un-.1+.1, 3*\un+1/2*\un) {$\Lambda^1\otimes\beta^1$};
\node at (4*\un+1/2*\un-.1+.1, 3*\un+1/2*\un) {$\Lambda^2\otimes\beta^1$};
\node at (5*\un+1/2*\un-.1+.1, 3*\un+1/2*\un) {$\Lambda^3\otimes\beta^1$};
\node at (6*\un+1/2*\un-.1+.1, 3*\un+1/2*\un) {$\Lambda^4\otimes\beta^1$};

\node at (0*\un+1/2*\un-.1+.1, 1*\un+1/2*\un) {$\Lambda^0\otimes\beta^{-1}$};
\node at (1*\un+1/2*\un-.1+.1, 1*\un+1/2*\un) {$\Lambda^1\otimes\beta^{-1}$};
\node at (2*\un+1/2*\un-.1+.1, 1*\un+1/2*\un) {$\Lambda^2\otimes\beta^{-1}$};
\node at (3*\un+1/2*\un-.1+.1, 1*\un+1/2*\un) {$\Lambda^3\otimes\beta^{-1}$};
\node at (4*\un+1/2*\un-.1+.1, 1*\un+1/2*\un) {$\Lambda^4\otimes\beta^{-1}$};

\endgroup

\end{tikzpicture}
\end{center}

\medskip

The Bott-Chern and Aeppli cohomologies of $B^{\bullet,\bullet}$ are related to the symplectic cohomologies of $X$,
$H^{\bullet}_{d+d^\Lambda}(X)\; ,\; H^{\bullet}_{dd^\Lambda}(X)$,
more precisely,
$$
H^{\bullet_1,\bullet_2}_{BC}(B^{\bullet,\bullet}) \;=\; H^{\bullet_1-\bullet_2}_{d+d^\Lambda}(X) \otimes \beta^{\bullet_2},
\qquad
H^{\bullet_1,\bullet_2}_{A}(B^{\bullet,\bullet}) \;=\; H^{\bullet_1-\bullet_2}_{dd^\Lambda}(X) \otimes \beta^{\bullet_2} \;.
$$
The conjugate-Dolbeault and Dolbeault cohomologies of $B^{\bullet,\bullet}$ are both related to the de Rham cohomology of $X$.
With the same idea of the proof of Theorem \ref{complex-inequalities} we can prove
the following

\begin{theorem}[{\cite[Theorem 6.2]{angella-tardini}}]\label{thm:upper-bound-symplectic}
 Let $X$ be a compact differentiable manifold of dimension $2n$ endowed with a symplectic structure $\omega$. Then, for any $k\in\Z/2\Z$,
 $$
 \sum_{h = k \,\mathrm{mod}\, 2} \dim_\rr H^{h}_{d+d^\Lambda}(X)
 \;\leq\;
 2(2n+1) \cdot \sum_{h\in\Z} \dim_\rr H^{h}_{dR}(X;\rr) \;,
 $$
 and
 $$
 \sum_{h = k \,\mathrm{mod}\, 2} \dim_\rr H^{h}_{dd^\Lambda}(X)
 \;\leq\;
 2(2n+1) \cdot \sum_{h\in\Z} \dim_\rr H^{h}_{dR}(X;\rr) \;.
 $$
\end{theorem}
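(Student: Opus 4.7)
My approach is to transfer the problem from the symplectic manifold $X$ to the associated bounded double complex $B^{\bullet,\bullet}$ introduced above, and to apply the double-complex generalization of Theorem \ref{complex-inequalities} established in \cite{angella-tardini}. The key input is the set of cohomological identifications
\[
H^{p,q}_{BC}(B^{\bullet,\bullet}) \cong H^{p-q}_{d+d^\Lambda}(X) \otimes \beta^{q}, \qquad H^{p,q}_{A}(B^{\bullet,\bullet}) \cong H^{p-q}_{dd^\Lambda}(X) \otimes \beta^{q},
\]
together with $H^{p,q}_{\partial}(B) \cong H^{p-q}_{dR}(X) \otimes \beta^{q}$ and $H^{p,q}_{\bar\partial}(B) \cong H^{p-q}_{d^\Lambda}(X) \otimes \beta^{q}$; through the symplectic Hodge duality $H^{\bullet}_{d^\Lambda}(X) \cong H^{2n-\bullet}_{dR}(X)$, the latter also reduces to de Rham. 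Summing over a fixed total degree $p+q=\ell$ then collapses to a sum of (symplectic or de Rham) cohomology dimensions indexed by $h = p-q$ in the range $\{0,\ldots,2n\}$ with $h \equiv \ell \pmod 2$.

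The next step is to invoke the bounded-support inequality. Since $B^{p,q}=0$ unless $0 \leq p-q \leq 2n$, the double complex has support in a diagonal strip of width $2n+1$, and the algebraic result of \cite{angella-tardini} yields
\[
\sum_{p+q=\ell} \dim_{\rr} H^{p,q}_{BC}(B) \;\leq\; (2n+1) \cdot \left( \sum_{p+q=\ell} \dim_{\rr} H^{p,q}_{\bar\partial}(B) + \sum_{p+q=\ell-1} \dim_{\rr} H^{p,q}_{\bar\partial}(B) \right),
\]
together with the analogous bound for the Aeppli cohomology. Translating via the identifications above, the left-hand side becomes $\sum_{h \equiv \ell \pmod 2,\; 0 \leq h \leq 2n} \dim H^{h}_{d+d^\Lambda}(X)$ after absorbing the formal factor $\beta^{q}$, while each of the two sums on the right is bounded by $\sum_{h} \dim_{\rr} H^{h}_{dR}(X;\rr)$. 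The two resulting summands on the right contribute the factor $2$ in the constant $2(2n+1)$, and one obtains the claimed inequality for an arbitrary fixed parity $k \bmod 2$ by choosing $\ell$ of the same parity; the statement for $H^{\bullet}_{dd^\Lambda}$ follows symmetrically.

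The main obstacle, which I expect to be addressed by direct reference to the general statement in \cite{angella-tardini}, is extending the zig-zag-counting argument of Theorem \ref{complex-inequalities} from a finite Hodge box to the infinite-but-strip-bounded situation: one must verify that the indecomposable decomposition of $B^{\bullet,\bullet}$ behaves well, and that the number of ingoing (respectively outgoing) corners on any fixed anti-diagonal is controlled, up to the strip width $2n+1$, by the number of extremal points on the two neighbouring anti-diagonals, which are the ones contributing to the Dolbeault and conjugate-Dolbeault cohomologies. Once this algebraic step is taken for granted, the proof of the theorem is essentially the dictionary between the cohomologies of $B^{\bullet,\bullet}$ and those of $(X,\omega)$ together with the elementary estimate relating the two sums to $\sum_{h} \dim_{\rr} H^{h}_{dR}(X;\rr)$.
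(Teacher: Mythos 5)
Your proposal is correct and follows essentially the same route as the paper: pass to the strip-bounded double complex $B^{\bullet_1,\bullet_2}=\wedge^{\bullet_1-\bullet_2}X\otimes\beta^{\bullet_2}$, invoke the generalization of Theorem \ref{complex-inequalities} to double complexes supported in a diagonal strip of width $2n+1$ from \cite{angella-tardini}, and translate back via the identifications of $H^{\bullet,\bullet}_{BC}(B)$, $H^{\bullet,\bullet}_{A}(B)$ with the symplectic Bott--Chern and Aeppli groups and of the (conjugate-)Dolbeault cohomologies of $B$ with de Rham cohomology. The bookkeeping of parities and the factor $2(2n+1)$ both check out.
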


\subsection{A characterization of the Hard Lefschetz condition}

In \cite{angella-tomassini-algebraic} Angella and Tomassini, starting from a purely
algebraic point of view,
introduce on a compact symplectic manifold $(X^{2n},\omega)$
the following non-negative integers 
\[
\Delta^k:= h^k_{d+d^\Lambda}+h^k_{dd^\Lambda}-2b_k\geq 0,
\qquad k\in\mathbb{Z},
\]
proving that, similarly to the complex case, their vanishing characterizes the $dd^\Lambda$-lemma which is equivalent to the validity
of the Hard-Lefschetz condition.
In this sense these numbers measure the HLC-degree of a symplectic manifold,
as their analogue in the complex case do (cf. \cite{angella-tomassini-inequality}).\\
Now, as already observed by Chan and Suen in \cite{chansuen},
using the equality $\dim H^\bullet_{d+d^\Lambda}(X)=
\dim H^\bullet_{dd^\Lambda}(X)$ proved in \cite{tsengyauI}, we get
\[
\Delta^k=2(h^k_{d+d^\Lambda}-b_k),\qquad k\in\mathbb{Z};
\]
therefore we can simplify them as in \cite{tardini-tomassini-symplectic}, considering just the difference between the dimensions of the
Bott-Chern and the de Rham cohomology groups. We define
\[
\tilde\Delta^k:=h^k_{d+d^\Lambda}-b_k,\qquad k\in\mathbb{Z}.
\]
Notice that a similar simplification can not be done in the complex case (cf. \cite{schweitzer}).
We put in evidence that, by duality, $\tilde\Delta^k=\tilde\Delta^{2n-k}$,
$k=0,\ldots 2n$, so for a compact symplectic manifold $(X,\omega)$
of dimension $2n$ we will refer
to $\tilde\Delta^k$, $k=0\ldots n$, as the \emph{non-HLC-degrees} of $X$.
Note that $\tilde\Delta^0=0$.\\
As a consequence of the positivity of $\Delta^k$, for any $k$, we have that
for all $k=1,\ldots,n$
\[
b_k\leq h^k_{d+d^\Lambda}
\]
on a compact symplectic $2n$-dimensional manifold.\\
Moreover the equalities 
\[
b_k=h^k_{d+d^\Lambda}, \qquad \forall k=1,\ldots,n,
\]
hold on a compact symplectic $2n$-dimensional manifold if and only if it 
satisfies the Hard-Lefschetz condition; namely 
the equality $b_\bullet= h^\bullet_{d+d^\Lambda}$ ensures
the bijectivity of the natural maps $H^\bullet_{d+d^\Lambda}(X)\longrightarrow
H^\bullet_{dR}(X)$, and hence the $dd^\Lambda$-lemma.\\
This considerations can be inserted in the more general setting of generalized complex manifolds,
see \cite{chansuen} for more details.\\
Similarly to the complex case where $\Delta^2$ characterizes the K\"ahlerianity
of a compact complex surface, if $2n=4$ we want
to show that the only degree which characterizes the Hard Lefschetz Condition is $\tilde\Delta^2$.
Notice that, differently to the complex case, in any dimension we have the following 
\begin{theorem}[{\cite[Theorem 4.3]{tardini-tomassini-symplectic}}]
Let $(X^{2n},\omega)$ be a compact symplectic manifold, then
the natural map induced
by the identity
\[
H^1_{d+d^\Lambda}(X)\longrightarrow H^1_{dR}(X)
\]
is an isomorphism. In particular,
$$\tilde\Delta^1=0.$$
\end{theorem}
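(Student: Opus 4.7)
The plan is to unfold the definitions and exploit low-degree vanishing. First I would observe that on $A^1(X)$ the operator $\Lambda: A^\bullet(X) \to A^{\bullet-2}(X)$ vanishes trivially for degree reasons, so $d^\Lambda = [d,\Lambda] = -\Lambda d$ on $A^1(X)$. Consequently every $d$-closed 1-form is automatically $d^\Lambda$-closed, hence $\ker d \cap \ker d^\Lambda \cap A^1(X) = \ker d \cap A^1(X)$, and the natural map in question becomes the tautological quotient map
$$
\frac{\ker d \cap A^1(X)}{\operatorname{Im}(dd^\Lambda) \cap A^1(X)} \longrightarrow \frac{\ker d \cap A^1(X)}{\operatorname{Im}(d) \cap A^1(X)}.
$$
Surjectivity is then immediate, since any closed 1-form already represents a symplectic Bott--Chern class lifting the given de Rham class.

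For injectivity I would take a $d$-exact closed 1-form $\alpha = df$ and try to produce $\beta \in A^1(X)$ with $dd^\Lambda\beta = df$. Using $d^\Lambda\beta = -\Lambda d\beta$ on 1-forms, the equation $dd^\Lambda\beta = df$ rewrites as $-d(\Lambda d\beta) = df$, which is equivalent to solving $d^\Lambda\beta = f + c$ for some locally constant function $c$. The problem thus reduces to identifying the image of the restriction $d^\Lambda \colon A^1(X) \to A^0(X)$.

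The main (and essentially only nontrivial) step will be to show that this image coincides with the subspace of functions $\{g \in A^0(X) : \int_{X_i} g\,\omega^n = 0 \text{ on each connected component } X_i\}$. The inclusion "$\subseteq$" is just Stokes' theorem, applied to the identity $\Lambda(d\beta)\cdot \omega^n/n! = d(\beta \wedge \omega^{n-1}/(n-1)!)$. For the reverse inclusion I would combine the formula $d^\Lambda = \star d\star$ on 1-forms with the Hodge-theoretic fact that $d(A^{2n-1}(X))$ is exactly the space of $2n$-forms of vanishing integral on each component; equivalently, the symplectic duality $H^0_{d^\Lambda}(X) \simeq H^{2n}_{dR}(X)$ gives the correct codimension. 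Choosing $c$ on each component so that $\int_{X_i}(f+c)\omega^n = 0$ then produces a solution $\beta$, yielding $dd^\Lambda\beta = df = \alpha$ and hence injectivity. The equality of dimensions $h^1_{d+d^\Lambda} = b_1$, and therefore $\tilde\Delta^1 = 0$, follows at once.
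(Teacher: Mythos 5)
Your proposal is correct, and its skeleton coincides with the paper's: surjectivity comes for free because $\Lambda$ kills $1$-forms, so every $d$-closed $1$-form is $d^\Lambda$-closed; injectivity reduces in both arguments to writing $f=c+d^\Lambda\beta$ with $c$ (locally) constant and $\beta\in A^1(X)$, whence $\alpha=df=dd^\Lambda\beta$. The difference lies in how that decomposition of $f$ is obtained. The paper simply invokes the Tseng--Yau Hodge decomposition for the $d^\Lambda$-cohomology, i.e.\ the elliptic theory of $\Delta_{d^\Lambda}$ applied in degree $0$. You instead identify $\mathrm{Im}\left(d^\Lambda\colon A^1(X)\to A^0(X)\right)$ directly: since $d^\Lambda=\star d\star$ on $1$-forms and $\star$ is a pointwise isomorphism with $\star\star=\mathrm{id}$, this image is $\star\bigl(d A^{2n-1}(X)\bigr)$, and $dA^{2n-1}(X)$ is exactly the space of top-degree forms with vanishing integral on each component by classical de Rham theory; the inclusion $\mathrm{Im}\,d^\Lambda\subseteq\{g:\int_{X_i}g\,\omega^n=0\}$ via $\Lambda(d\beta)\,\omega^n/n!=d(\beta\wedge\omega^{n-1}/(n-1)!)$ is consistent with this. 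What your route buys is self-containedness in low degree: it replaces the fourth-order (or second-order) elliptic machinery by the elementary fact that $H^{2n}_{dR}$ is computed by integration, and it handles disconnected manifolds transparently by allowing $c$ to be locally constant. What it costs is that it is special to the bidegree at hand, whereas the Hodge decomposition quoted in the paper is the uniform tool available in all degrees.
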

\begin{proof}
For the sake of completeness we briefly recall here the proof. 
For the surjectivity, if $\alpha$ is
a $d$-closed $1$-form, then it is also $d^\Lambda$-closed, indeed
$$
d^\Lambda\alpha=\left[d,\Lambda\right]\alpha=-\Lambda d\alpha=0.
$$
We need to prove the injectivity. Let $a=[\alpha]\in H^1_{d+d^\Lambda}(X)$ be such that $a=0$ in $H^1_{dR}(X)$, namely
$\alpha=df$ for some smooth function $f$ on $X$.
Considering the Hodge decomposition of $f$ with respect to the $d^\Lambda$-cohomology (cf. \cite{tsengyauI})
we get $f=c+d^\Lambda\beta$ with $c$ constant and $\beta$ differential
$1$-form. Hence
\[
\alpha=df=d(c+d^\Lambda\beta)=dd^\Lambda\beta,
\]
i.e., $[\alpha]=0\in H^1_{d+d^\Lambda}(X)$.\\
As a consequence, $b_1=h^1_{d+d^\Lambda}$, implying
$\tilde\Delta^1=h^1_{d+d^\Lambda}-b_1=0$ and concluding the proof.
\qed
\end{proof}
The analog result for the complex Bott-Chern cohomology is not true,
see e.g., \cite[Remark 3.6]{angella-kasuya-solvmanifolds}.
The previous Theorem lead us to the following quantitative characterization of the Hard Lefschetz condition in dimension $4$.
\begin{theorem}[{\cite[Theorem 4.5]{tardini-tomassini-symplectic}}]\label{characterization HLC}
Let $(X^4,\omega)$ be a compact symplectic $4$-manifold, then
it satisfies
\[
HLC \iff \tilde\Delta^2=0 \iff b_2=h^2_{d+d^\Lambda}.
\]
\end{theorem}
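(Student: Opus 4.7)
The plan is to reduce the characterization to the single degree $k=2$ using the vanishing results already established for the other degrees, together with Poincaré-type duality for the symplectic Bott-Chern cohomology.

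First I would recall from the discussion preceding the statement that, on any compact symplectic manifold $(X^{2n},\omega)$, the equality $h^k_{d+d^\Lambda}(X)=h^k_{dd^\Lambda}(X)$ (which follows from the Tseng--Yau isomorphisms $\mathcal{H}^k_{d+d^\Lambda}\simeq \mathcal{H}^{2n-k}_{dd^\Lambda}$ combined with $\mathcal{H}^k_{d+d^\Lambda}\simeq \mathcal{H}^{2n-k}_{d+d^\Lambda}$) gives
\[
\Delta^k \;=\; h^k_{d+d^\Lambda}+h^k_{dd^\Lambda}-2b_k \;=\; 2\tilde\Delta^k.
\]
Hence, by the HLC characterization stated in the main theorem of the introduction, $HLC$ is equivalent to the vanishing of $\tilde\Delta^k$ for every $k\in\mathbb{Z}$.

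Next I would show that in dimension $2n=4$ only the degree $k=2$ is potentially nontrivial. The degrees $k=0$ and $k=4$ are immediate from connectedness (or directly from the isomorphism $H^0_{d+d^\Lambda}(X)\cong H^0_{dR}(X)\cong\mathbb{R}$ and the top-degree analogue). For $k=1$, the previous theorem proves that the natural map $H^1_{d+d^\Lambda}(X)\to H^1_{dR}(X)$ is an isomorphism, so $\tilde\Delta^1=0$. Then the Schweitzer-type symplectic duality $\tilde\Delta^k=\tilde\Delta^{2n-k}$ (mentioned just after the definition of $\tilde\Delta^k$, coming from the $\star$- and $L$-induced isomorphisms between $\mathcal{H}^k_{d+d^\Lambda}$ and $\mathcal{H}^{2n-k}_{d+d^\Lambda}$) forces $\tilde\Delta^3=\tilde\Delta^1=0$ and $\tilde\Delta^4=\tilde\Delta^0=0$.

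Combining the two steps, $HLC$ holds if and only if $\tilde\Delta^2=0$, and since $\tilde\Delta^2=h^2_{d+d^\Lambda}-b_2$, this is in turn equivalent to $b_2=h^2_{d+d^\Lambda}$. There is no real obstacle here: the statement is essentially a bookkeeping consequence of the previous theorem together with the symplectic Hodge-$\star$ duality, and the entire argument is algebraic once one has the equality $h^k_{d+d^\Lambda}=h^k_{dd^\Lambda}$ and the vanishing $\tilde\Delta^1=0$ at hand.
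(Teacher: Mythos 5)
Your argument is correct and is precisely the route the paper intends: the paper does not spell out a proof (it defers to \cite[Theorem 4.5]{tardini-tomassini-symplectic}), but the surrounding text supplies exactly your ingredients --- $\Delta^k=2\tilde\Delta^k$ via the Tseng--Yau equalities, the general equivalence $HLC\iff\Delta^k=0$ for all $k$, the duality $\tilde\Delta^k=\tilde\Delta^{2n-k}$, and the vanishing $\tilde\Delta^0=\tilde\Delta^1=0$ --- and presents the theorem as the immediate consequence of the $\tilde\Delta^1=0$ result, just as you do. No gaps.
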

Therefore in $4$-dimensions it is possible to study the Hard Lefschetz condition
by studying the dependence of the space $H^2_{d+d^\Lambda}(X)$
on the symplectic structure.\\

\begin{rem}
As shown in \cite{teleman} on a compact complex surface $\Delta^2\in\left\lbrace 0,2\right\rbrace$;
in \cite{tardini-tomassini-symplectic} with Tomassini we provide an 
explicit example of a compact symplectic $4$-manifold with
$\Delta^2\notin\left\lbrace 0,2\right\rbrace$, or equivalently $\tilde\Delta^2\notin\left\lbrace 0,1\right\rbrace$, showing hence a different behavior in the symplectic case.
More precisely we compute the non-HLC degree $\tilde\Delta^2$ when $X$ is a compact
$4$-dimensional manifold diffeomorphic to
a solvmanifold $\Gamma\backslash G$ (i.e., the compact quotient of a connected
simply-connected solvable Lie group $G$ by a discrete cocompact subgroup
$\Gamma$) admitting a
left-invariant symplectic structure; for a partial computation cfr.
\cite[Table 2]{angellakasuya}.\\
In detail,
if $X=\Gamma\backslash G$ is a compact solvmanifold of dimension $4$ with
$\omega$
left-invariant symplectic structure,
then, according to $\mathfrak{g}=Lie(G)$, we have the following cases
\begin{itemize}
\item[a)] if $\mathfrak{g}=\mathfrak{g}_{3,1}\oplus\mathfrak{g}_1$, then
$\tilde\Delta^2=1$;
\item[b)] if $\mathfrak{g}=\mathfrak{g}_{1}\oplus\mathfrak{g}_{3,4}^{-1}$, then
$\tilde\Delta^2=0$;
\item[c)] if $\mathfrak{g}=\mathfrak{g}_{4,1}$, then
$\tilde\Delta^2=2$.
\end{itemize}
See \cite{tardini-tomassini-symplectic} for the computations.\\
Notice that, by applying Theorem \ref{thm:upper-bound-symplectic}, with an easy computation we obtain the following (quite large) inequalities for a general compact symplectic
$4$-manifold $(X^4,\omega)$,
$$
b_2\leq h^2_{d+d^\Lambda}\leq 10\;b_2+20\;b_1+18
$$
and
$$
0\leq \tilde\Delta^2\leq 9\;b_2+20\;b_1+18.
$$
\end{rem}


\begin{thebibliography}{99}


\bibitem{aeppli}
Aeppli, A.: On the cohomology structure of Stein manifolds, in {\em Proc. Conf. Complex Analysis (Minneapolis, Minn., 1964)}, Springer, Berlin, 58--70 (1965)

\bibitem{angella-iwasawa}
Angella, D.: The cohomologies of the Iwasawa manifold and of its small deformations. J. Geom. Anal. \textbf{23}, no. 3, 1355--1378 (2013)

\bibitem{angella-survey}
Angella, D.: On the Bott-Chern and Aeppli cohomology. In {\em Bielefeld Geometry \& Topology Days} (2015)


\bibitem{angella-kasuya-solvmanifolds} Angella, D., Kasuya, H.: Bott-Chern cohomology of solvmanifolds \texttt{arXiv:1212.5708 [math.DG]}, (2016)

\bibitem{angellakasuya} Angella, D., Kasuya, H.: 
Symplectic Bott-Chern cohomology of solvmanifolds,
 \texttt{arXiv:1308.4258 [math.SG]}, (2016)

\bibitem{angella-tardini} Angella, D., Tardini, N.: 
Quantitative and qualitative cohomological properties for non-K\"ahler
manifolds. Proc. Amer. Math. Soc. \textbf{145} no.~1, 273--285 (2017)

\bibitem{angella-tomassini-inequality}
Angella, D. , Tomassini, A.: On the $\partial\overline{\partial}$-lemma and Bott-Chern cohomology. Invent. Math. \textbf{192} no.~1, 71--81 (2013)

\bibitem{angella-tomassini-algebraic}
Angella, D., Tomassini, A.: Inequalities \`a la Fr\"olicher and cohomological decompositions. J. Noncommut. Geom. \textbf{9} no.~2, 505--542 (2015)

\bibitem{angella-tomassini-formality}
Angella, D., Tomassini, A.: On Bott-Chern cohomology and formality. J. Geom.
Phys. \textbf{93}, 52--61 (2015)

\bibitem{angella-dlousski-tomassini}
Angella, D., Dloussky, G., Tomassini, A.:
On Bott-Chern cohomology of compact complex surfaces. Ann. Mat. Pura Appl.\textbf{195} no.~1, 199--217 (2016)


\bibitem{angella-tomassini-verbitsky} Angella, D., Tomassini, A., Verbitsky,
M.:
On non-K\"ahler degrees of complex manifolds, \texttt{arXiv:1605.03368},
(2016)


\bibitem{angella-otiman-tardini} Angella, D., Otiman, A., 
Tardini, N.: Cohomological properties of locally conformally symplectic structures, in preparation.

\bibitem{bismut}
Bismut, J.-M.:  A local index theorem for non-K\"ahler manifolds. Math. Ann.
\textbf{284}, 681--699 (1989)

\bibitem{bott-chern}
Bott, R., Chern, S.~S.: Hermitian vector bundles and the equidistribution of the zeroes of their holomorphic sections. Acta Math. \textbf{114} no.~1, 71--112  (1965)


\bibitem{brylinski} Brylinski, J.-L.: A differential complex for Poisson manifolds,
J. Differ. Geom. \textbf{28} no.~1, 93--114 (1988)

\bibitem{chansuen} Chan, K., Suen, Y.-H.: A Fr\"olicher-type inequality for generalized complex manifolds. Ann. Global Anal. Geom. \textbf{47} no.~2, 135--145 (2015),

\bibitem{cavalcanti} Cavalcanti, G. R.: New aspects of the $dd^c$-lemma,
Oxford University D. Phil thesis, \texttt{arXiv:math/0501406v1 [math.DG]},
(2005)

\bibitem{deligne-griffiths-morgan-sullivan}
Deligne, P., Griffiths, Ph.~A., Morgan, J., Sullivan, D.~P.: Real homotopy theory of K\"ahler manifolds. Invent. Math. \textbf{29} no.~3, 245--274 (1975)

\bibitem{frolicher}
Fr\"olicher, A.: Relations between the cohomology groups of Dolbeault and topological invariants. Proc. Natl. Acad. Sci. USA \textbf{41} no.~9, 641--644
(1955)


\bibitem{kodaira}
Kodaira, K.: On the structure of compact complex analytic surfaces. I. Am. J.
Math. \textbf{86}, 751--798 (1964)

\bibitem{kodaira-spencer}
Kodaira, K., Spencer, D.~C.: On deformations of complex analytic structures. III. Stability theorems for complex structures. Ann. of Math. (2) \textbf{71}, 43--76
(1960)

\bibitem{mathieu} Mathieu, O.: Harmonic cohomology classes of symplectic manifolds. Comment. Math. Helv. \textbf{70} no.~1, 1--9 (1995)

\bibitem{merkulov} Merkulov, S. A.: Formality of canonical symplectic complexes and Frobenius
manifolds. Int. Math. Res. Not. \textbf{1998} no.~14, 727--733  (1998)

\bibitem{michelsohn}
Michelsohn, M. L.: On the Existence of Special Metrics in Complex
Geometry. Acta Math. \textbf{143}, 261--295 (1983)

\bibitem{miyaoka}
Miyaoka, Y.: K\"ahler metrics on elliptic surfaces. Proc. Jpn. Acad.
\textbf{50} (8), 533--536 (1974)
 
\bibitem{schweitzer}
Schweitzer, M: Autour de la cohomologie de Bott-Chern, Pr\'epublication de l'Institut Fourier no.~703, \texttt{arXiv:0709.3528}, (2007)

\bibitem{siu}
Siu, Y. T.: Every K3 surface is K\"ahler. Invent. Math. \textbf{73} (1),
139--150 (1983)

\bibitem{tardini-tomassini-formality}
Tardini, N., Tomassini, A.: On geometric Bott-Chern formality and deformations, \emph{Ann. Mat. Pura Appl.} 
(2016), doi:10.1007/s10231-016-0575-6.  Article electronically published on May 02, 2016.

\bibitem{tardini-tomassini-symplectic} Tardini, N., Tomassini, A.:
On the cohomology of almost-complex and symplectic manifolds and proper surjective maps, Internat. J. Math. \textbf{27} no.~12, 1650103 (20 pages) (2016)

\bibitem{teleman} Teleman, A.: The pseudo-effective cone of a non-K\"ahlerian surface and applications, Math. Ann. \textbf{335}
965--989 (2006)

\bibitem{tsengyauI} Tseng, L.-S. , Yau, S.-T.: Cohomology and Hodge Theory on Symplectic manifolds: I. J. Differ. Geom. \textbf{91} no.~3, 383--416
 (2012)

\bibitem{varouchas}
Varouchas, J.: Propriet\'es cohomologiques d'une classe de vari\'et\'es analytiques complexes compactes, in {\em S\'eminaire d'analyse P. Lelong-P. Dolbeault-H. Skoda, ann\'ees 1983/1984}, Lecture Notes in Math., vol. \textbf{1198}, 233--243, Springer, Berlin (1986)

\bibitem{voisin} Voisin, C.: Th\'eorie de Hodge et g\'eom\'etrie alg\'ebrique complexe, Cours Sp\'ecialis\'es, \textbf{10} Soci\'et\'e Math\'ematique
de France, Paris, (2002)

\bibitem{wu} Wu, C.-C.: On the geometry of superstrings with torsion, Thesis (Ph.D.) Harvard University, Proquest LLC,
Ann Arbor, MI, (2006)

\bibitem{yan} Yan, D.: Hodge structure on symplectic manifolds. Adv. Math.\textbf{120} no.~1, 143--154 (1996)




\end{thebibliography}
\end{document}